\newtheorem{thm}{Theorem}[section]
\newtheorem{lem}[thm]{Lemma}
\newtheorem{pro}[thm]{Proposition}
\numberwithin{equation}{section}
\journal{}
\begin{document}
\begin{spacing}{1.15}
\begin{frontmatter}
\title{\textbf{\\ On the  second-largest modulus among the eigenvalues of a power hypergraph}}
\author[label1]{Changjiang Bu}\ead{buchangjiang@hrbeu.edu.cn}
\author[label2]{Lixiang Chen}\ead{clxmath@163.com}
\author[label2]{Yongtang Shi}\ead{shi@nankai.edu.cn}

\address[label1]{College of Mathematical Sciences, Harbin Engineering University, Harbin, PR China}
\address[label2]{Center for Combinatorics, LPMC, Nankai University, Tianjin, PR China}


\begin{abstract}
It is well known that the algebraic multiplicity of an eigenvalue of a graph (or real symmetric matrix) is equal to the dimension of its corresponding linear eigen-subspace, also known as the geometric multiplicity.
However, for hypergraphs, the relationship between these two multiplicities remains an open problem.
For a graph $G=(V,E)$ and $k \geq 3$, the $k$-power hypergraph $G^{(k)}$ is a $k$-uniform hypergraph obtained by adding $k-2$ new vertices to each edge of $G$, who always has non-real eigenvalues.
In this paper, we determine the second-largest modulus $\Lambda$ among the eigenvalues of $G^{(k)}$, which is indeed an eigenvalue of $G^{(k)}$.
The projective eigenvariety $\mathbb{V}_{\Lambda}$ associated with  $\Lambda$ is the set of the eigenvectors of $G^{(k)}$ corresponding to $\Lambda$ considered in the complex projective space.
We show that the dimension of $\mathbb{V}_{\Lambda}$ is zero, i.e, there are finitely many eigenvectors corresponding to $\Lambda$ up to a scalar.
We give both the algebraic multiplicity of $\Lambda$ and the total multiplicity of the eigenvector in $\mathbb{V}_{\Lambda}$ in terms of the number of the weakest edges of $G$. 
Our result show that these two multiplicities are equal.
\end{abstract}
%
\begin{keyword} eigenvalue, eigenvector, multiplicity, signed graph, power hypergraph \\
\emph{AMS classification(2020):} 05C50, 05C65.
\end{keyword}
\end{frontmatter}

\section{Introduction}
Let $\rho(G)$ and $\Lambda(G)$ denote the largest and second-largest moduli among  eigenvalues of a graph $G$, both of which have been extensively studied in various fields.
From the Perron-Frobenius Theorem, the largest modulus $\rho(G)$, also known as the spectral radius, is an eigenvalue of $G$ with algebraic multiplicity 1 if $G$ is connected.
The spectral radius $\rho(G)$ is a key quantity determining various different dynamical processes on $G$ \cite{Mieghem2009}.
Restrepo et al. introduced and investigated the \emph{dynamical importance} of an edge $e$, defined as the decrease of the spectral radius caused by its removal, i.e., $\rho(G) - \rho(G-e)$, and applied their results to real networks \cite{PhysRevLett.97.094102}.
The edges with the maximum and minimum dynamical importance are referred to  as the \emph{strongest edges} and \emph{weakest edges} of $G$, respectively.
The strongest edges of a graph have been widely studied \cite{2015147,PhysRevE.84.016101}, while this paper provides results concerning the weakest edges of a graph.

Evaluating the value of $\Lambda(G)$ and the algebraic multiplicity of the second-largest eigenvalues of $G$ are fairly challenging problems and have some applications in other mathematical fields.
For an $r$-regular connected graph $G$, if $\Lambda(G)$ is small compared with $r$ then $G$ is a good expander \cite[Page 68]{cvetkovic1980spectra}.
Specifically,  $G$ is called a Ramanujan graph if $\Lambda(G) \leq 2\sqrt{r-1}$ (see \cite{Murty2003}).
Jiang et al. evaluated  the algebraic multiplicity of the second-largest eigenvalues of graph with bounded maximum degree, and applied this result to solve a longstanding problem on the maximum number of equiangular lines in high dimensional euclidean space \cite{zi2021}.

Many authors have attempted to develop a theory on the spectral radius and the second-largest eigenvalue of hypergraphs, as it has applications in some fields \cite{chung1993laplacian,feng1996spectra,Friedman1995second,Li2019first}.
A \emph{$k$-uniform hypergraph} is a generalization of a graph in which the edges are $k$-element subsets of the set of vertices.
\emph{Tensors} (also called hypermatrices) are natural generalizations of matrices.
A matrix can be represented as an array indexed by two subscripts,  whereas a  $k$-order tensor is indexed by $k$ subscripts.
The spectrum of \emph{the adjacency tensor} of a hypergraph  is called \emph{the spectrum of the hypergraph} \cite{cooper2012spectra}.
The \emph{algebraic multiplicity} of an eigenvalue $\lambda$ refers to the number of times $\lambda$ appears as a root of the characteristic polynomial.
The \emph{eigenvariety} $\mathcal{V}_{\lambda}$ associated with $\lambda$ is the set of eigenvectors corresponding to $\lambda$ together with zero, and the \emph{projective eigenvariety} $\mathbb{V}_{\lambda}$ is the set of eigenvectors corresponding to $\lambda$ consider in the complex projective space.
Due to the nonlinearity of the eigenvalue equations of tensors,  the variety $\mathcal{V}_{\lambda}$ is generally not a linear subspace.
The relationship between the algebraic multiplicity of eigenvalues and the dimension of the eigenvariety is well understood for graphs (or matrices), as it follows directly from classical linear algebra.
However, for hypergraphs (or tensors), this relationship remains an open problem,
as discussed in the Hu-Ye's conjecture \cite{Hu2016}, the Fan's conjecture \cite{fan2024}, and the verifications of these conjectures  in some cases \cite{Cooper2022,Zheng2024}.

For a $k$-uniform hypergraph $H$, the spectral radius $\rho(H)$ is an eigenvalue of $H$ as shown by the Perron-Frobenius Theorem for tensors \cite{Chang2008}.
Fan et al. proved that the dimension of the projective eigenvariety $\mathbb{V}_{\rho}(H)$ is zero if $H$ is connected \cite{fan2019dimension}, and determined the size $|\mathbb{V}_{\rho}(H)|$ of $\mathbb{V}_{\rho}(H)$ \cite{Fan2019}.
The $k$-power hypergraph $G^{(k)}$ is the $k$-uniform hypergraph that is obtained by adding $k-2$ new vertices to each edges of a graph $G$ for $k \geq 3$, which is also known as the expansion of  $G$ in extremal hypergraph theory.
Chen et al. characterized all of the eigenvalues of $G^{(k)}$ in terms of the eigenvalues of signed subgraphs of $G$ \cite{CHEN2023205}. They also determined the algebraic multiplicity $\mathrm{am}_{\rho}(G^{(k)})$ of $\rho(G^{(k)})$ \cite{CHEN2024105909}.
Notably, Chen et al.'s results, together with those of Fan et al., led to the conclusion that
$\mathrm{am}_{\rho}(G^{(k)}) = |\mathbb{V}_{\rho}(G^{(k)})|$ as shown in \cite{fan2024}.

In this paper, we determined the second-largest modulus $\Lambda$ among eigenvalues of $G^{(k)}$.
More precisely, for $k \geq 4$, we show that $\Lambda=\sqrt[k]{\rho(G-e)^2}$, where $e$ is the weakest edge of $G$ (see Theorem \ref{eigenvalue}). For $k=3$, we determine $\Lambda$ in all distinct cases (see Theorem \ref{thm4}).
For a connected graph $G\neq K_2$ and $k \geq 4$,
we show that  the dimension of the projective eigenvariety $\mathbb{V}_{\Lambda}(G^{(k)})$ is zero (see Theorem \ref{finite}), i.e, there are finitely many eigenvectors of $G^{(k)}$ corresponding to $\Lambda$ up to a scalar.
For the zero-dimensional projective eigenvariety $\mathbb{V}_{\Lambda}(G^{(k)})$, we use $m(\mathbf{p})$ to denote the multiplicity of point $\mathbf{p}$ in $\mathbb{V}_{\Lambda}(G^{(k)})$.
Let $\#\mathbb{V}_{\Lambda}(G^{(k)})$ be the total multiplicity of its eigenvector in the $\mathbb{V}_{\Lambda}(G^{(k)})$, i.e., $\#\mathbb{V}_{\Lambda}(G^{(k)})=\sum_{\mathbf{p} \in \mathbb{V}_{\Lambda}(G^{(k)}) } m(\mathbf{p})$.
In terms of the number of weakest edges of $G$, we give both the algebraic multiplicity $\mathrm{am}_{\Lambda}(G^{(k)})$ and $\#\mathbb{V}_{\Lambda}(G^{(k)})$, thereby showing that $\mathrm{am}_{\Lambda}(G^{(k)})=\#\mathbb{V}_{\Lambda}(G^{(k)})$ (see Theorem \ref{multiplicity}).

\section{Preliminaries}

In this section, we introduce some basic notation and give auxiliary lemmas on the spectra of signed graphs and hypergraphs.

\subsection{The spectra of signed graphs }

A \emph{signed graph} $G_\pi$ is a pair $(G, \pi)$, where $G=(V,E)$ is a graph and $\pi:E \rightarrow \{+1,-1\}$ is the edge sign function.
We use $i \sim j$ to denote that the vertices $i$ and $j$ are adjacent in the graph $G$.
The adjacency matrix $A(G_\pi)=(A_{ij})$ of the signed graph $G_\pi$ is the symmetric $\{0,+1,-1\}$-matrix, where
\begin{align*}
A_{ij}=\left\{ \begin{array}{l}
 \pi(i,j),{\kern 35pt}  i \sim j, \\
 0, {\kern 57pt}\mathrm{ otherwise}. \\
\end{array} \right.
\end{align*}
The eigenvalues of $A(G_{\pi})$ are called the eigenvalues of $G_{\pi}$.


For a signed graph $G_{\pi}$ with $n$ vertices, let $\lambda_1(G_{\pi}) \geq \lambda_2(G_{\pi})\geq \cdots \geq \lambda_n(G_{\pi})$ be the eigenvalues of $G_{\pi}$.
The Cauchy Interlacing Theorem holds for principal sub-matrices of real symmetric matrices \cite[Theorem 1.3.11]{cve2010spectra}, so it is valid in signed graphs.
For any vertex $v$ of $G_{\pi}$, we have that
$\lambda_{1}(G_{\pi}) \geq  \lambda_{1}(G_{\pi}-v) \geq \lambda_{2}(G_{\pi}) \geq \lambda_{2}(G_{\pi}-v)\geq \cdots \geq \lambda_{n-1}(G_{\pi}-v) \geq \lambda_{n}({G_{\pi}})$.
Note that  $\lambda_{1}(G_{\pi})$ is not necessarily equal to the spectral radius $\rho(G_{\pi})$. In fact, $\rho(G_{\pi})=\max\{|\lambda_i(G_{\pi})|: i \in [n]\}=\max\{\lambda_1(G_{\pi}),-\lambda_{n}({G_{\pi}})\}$.

We call signed graphs $G_{\pi}$ and $G_{\pi'}$ \emph{switching equivalent} if there is a diagonal matrix $D$ with diagonal entries $\pm 1$ such that $A(G_{\pi'})=D^{-1}A(G_{\pi})D$. Clearly, switching equivalent signed graphs have the same spectrum. The signed graphs $G_{+}$ and $G_{-}$ are the ones with all signs $+1$ and all signs $-1$, respectively.
The spectral radius of a signed graphs $G_{\pi}$  does not exceed the spectral radius of its underlying graph $G$ \cite{CHEN2024105909}.

\begin{lem}\cite[Lemma 2.6]{CHEN2024105909}\label{lemma1}
Let $G$ be a connected graph. Then $\rho(G_{\pi}) \leq \rho(G)$, with equality if and only if $G_{\pi}$ is switching equivalent to $G_{+}$ or $G_{-}$.
\end{lem}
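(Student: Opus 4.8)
The plan is to prove the inequality through the standard fact that taking entrywise absolute values can only increase the spectral radius, and then to extract the equality characterization from the equality case of the Perron--Frobenius theorem applied to the irreducible nonnegative matrix $A(G)$. First I would observe that the entrywise absolute value of $A(G_{\pi})$ is exactly $A(G)$, since $|\pi(i,j)|=1$ on every edge. Because $G$ is connected, $A(G)$ is nonnegative and irreducible, so $\rho(G)$ is a simple eigenvalue with a strictly positive Perron eigenvector. For any complex matrix $B$ with $|B| \le A(G)$ entrywise one has $\rho(B) \le \rho(A(G))$; applying this to $B=A(G_{\pi})$ yields $\rho(G_{\pi}) \le \rho(G)$ at once.

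The easy direction of the equivalence is the ``if'' part. If $G_{\pi}$ is switching equivalent to $G_{+}$ or $G_{-}$, then $A(G_{\pi})=D A(G_{+})D$ or $A(G_{\pi})=D A(G_{-})D$ for some $\pm 1$ diagonal matrix $D$, so $A(G_{\pi})$ is similar to $\pm A(G)$ and hence shares the same set of moduli of eigenvalues. Since the eigenvalues of $-A(G)$ are the negatives of those of $A(G)$, both $G_{+}$ and $G_{-}$ have spectral radius $\rho(G)$, giving $\rho(G_{\pi})=\rho(G)$.

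For the ``only if'' part, suppose $\rho(G_{\pi})=\rho(G)$. As $A(G_{\pi})$ is real symmetric, its eigenvalue of largest modulus is real and equals $\varepsilon\,\rho(G)$ for some $\varepsilon\in\{+1,-1\}$; let $\mathbf{y}$ be a corresponding real eigenvector, so $A(G_{\pi})\mathbf{y}=\varepsilon\,\rho(G)\mathbf{y}$. Taking absolute values entrywise and using $|A(G_{\pi})|=A(G)$ gives $A(G)|\mathbf{y}| \ge \rho(G)|\mathbf{y}|$; pairing this against the positive Perron vector of $A(G)$ forces equality, so $|\mathbf{y}|$ is itself a Perron vector of $A(G)$ and is therefore strictly positive, meaning $\mathbf{y}$ has no zero entry. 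I would then read off the sign pattern row by row: for each vertex $i$ the relation $\big|\sum_{j\sim i}\pi(i,j)y_j\big| = \rho(G)|y_i| = \sum_{j\sim i}|y_j|$ makes the triangle inequality tight, so all the real numbers $\pi(i,j)y_j$ with $j\sim i$ share the common sign $\varepsilon\,\mathrm{sign}(y_i)$, whence $\pi(i,j)=\varepsilon\,\mathrm{sign}(y_i)\mathrm{sign}(y_j)$ on every edge. Setting $D=\mathrm{diag}(\mathrm{sign}(y_1),\dots,\mathrm{sign}(y_n))$ then gives $A(G_{\pi})=\varepsilon\,D A(G) D$, which is $D A(G_{+})D$ when $\varepsilon=+1$ and $D A(G_{-})D$ when $\varepsilon=-1$; thus $G_{\pi}$ is switching equivalent to $G_{+}$ or $G_{-}$.

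The main obstacle is the equality analysis in the last paragraph: one must justify both that $|\mathbf{y}|$ has no zero entries (which rests on irreducibility and the strict positivity of the Perron vector) and that the row-wise triangle-inequality equalities assemble into a single globally consistent sign assignment. This global consistency, which is exactly what lets the local sign conditions combine into one switching matrix $D$, is where connectedness of $G$ is used in an essential way.
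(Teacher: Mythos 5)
Your argument is correct and complete. One caveat on the comparison you were asked for: this paper does not prove the lemma at all --- it is quoted from \cite[Lemma 2.6]{CHEN2024105909} --- so there is no in-paper proof to measure you against; your proof is the classical Perron--Frobenius route, which is also the standard way this fact is established in the signed-graph literature and matches the toolkit (switching, entrywise absolute values of eigenvectors, Rayleigh-type comparisons) that the paper itself uses in the proof of Proposition \ref{pro7}. All the key steps check out: $|A(G_{\pi})|=A(G)$ entrywise gives $\rho(G_{\pi})\leq\rho(G)$; in the equality case, pairing $A(G)|\mathbf{y}|\geq\rho(G)|\mathbf{y}|$ against the strictly positive Perron vector (left and right coincide since $A(G)$ is symmetric) forces $|\mathbf{y}|$ to be a Perron vector, hence strictly positive by irreducibility; the row-wise tight triangle inequality then yields $\pi(i,j)=\varepsilon\,\mathrm{sign}(y_i)\,\mathrm{sign}(y_j)$ on every edge, and $D=\mathrm{diag}(\mathrm{sign}(y_1),\dots,\mathrm{sign}(y_n))$ realizes the switching to $G_{+}$ (if $\varepsilon=+1$) or $G_{-}$ (if $\varepsilon=-1$). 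One small remark: the ``global consistency'' issue you flag at the end as the main obstacle is in fact already dispatched by your own construction --- since $D$ is defined once and for all from the signs of the eigenvector $\mathbf{y}$, the edge conditions are verified locally against a single global sign assignment and no further gluing argument is needed; connectedness enters only through irreducibility, namely in guaranteeing that the Perron vector and $|\mathbf{y}|$ are strictly positive so that every $\mathrm{sign}(y_i)$ is well defined.
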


\begin{lem}\cite[Fact 2]{STANIC201980}\label{lem2.2}
For a real vector $\mathbf{x}=(x_i)$, let the vector $\mathbf{x}^*=(|x_i|)$.
For any eigenpair $(\lambda,\mathbf{x})$ of a signed graph $G_{\pi}$, there exists a switching equivalent signed graph of $G_{\pi}$ with eigenpair $(\lambda, \mathbf{x}^*)$.
\end{lem}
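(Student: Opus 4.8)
The plan is to exhibit the required switching explicitly from the sign pattern of the eigenvector $\mathbf{x}$. First I would define the diagonal signature matrix $D=\mathrm{diag}(d_1,\dots,d_n)$ by setting $d_i=1$ when $x_i\geq 0$ and $d_i=-1$ when $x_i<0$; equivalently $d_i=\mathrm{sgn}(x_i)$ with the convention $\mathrm{sgn}(0)=1$. The decisive feature of this choice is that $D\mathbf{x}=\mathbf{x}^*$, and since $D^2=I$ (each $d_i=\pm 1$, so $D^{-1}=D$) also $D\mathbf{x}^*=\mathbf{x}$. Both identities are checked componentwise, the only entry requiring attention being $x_i=0$, where both sides vanish regardless of $d_i$.

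With $D$ in hand I would take $G_{\pi'}$ to be the signed graph whose adjacency matrix is $A(G_{\pi'})=D^{-1}A(G_\pi)D=DA(G_\pi)D$. This is a legitimate signed adjacency matrix: its $(i,j)$ entry equals $d_i A(G_\pi)_{ij} d_j$, so it is symmetric, takes values in $\{0,+1,-1\}$, and has exactly the same zero pattern as $A(G_\pi)$. Hence $G_{\pi'}$ shares the underlying graph $G$ and, by the definition of switching equivalence recalled above, is switching equivalent to $G_\pi$.

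It then remains only to verify the eigenpair relation. Starting from the hypothesis $A(G_\pi)\mathbf{x}=\lambda\mathbf{x}$ and applying the two identities for $D$, I would compute
\begin{equation*}
A(G_{\pi'})\mathbf{x}^*=DA(G_\pi)D\mathbf{x}^*=DA(G_\pi)\mathbf{x}=\lambda D\mathbf{x}=\lambda\mathbf{x}^*,
\end{equation*}
so that $(\lambda,\mathbf{x}^*)$ is an eigenpair of $G_{\pi'}$, as claimed.

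The argument is short, and the only genuine subtlety—hence the step I would watch most carefully—is the treatment of the vanishing coordinates of $\mathbf{x}$. The entries $d_i$ corresponding to $x_i=0$ are not fixed by the sign convention, but this causes no harm: such coordinates contribute $0$ to both $\mathbf{x}$ and $\mathbf{x}^*$, so the identities $D\mathbf{x}=\mathbf{x}^*$ and $D\mathbf{x}^*=\mathbf{x}$ persist however these diagonal entries are chosen. The remaining freedom simply reflects that the switched graph $G_{\pi'}$ need not be unique when $\mathbf{x}$ has zero entries.
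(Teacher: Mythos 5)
Your proof is correct: the paper itself gives no proof of this lemma (it is quoted as Fact~2 from the cited work of Stani\'{c}), and your argument via the signature matrix $D=\mathrm{diag}(\mathrm{sgn}(x_i))$ with $D\mathbf{x}=\mathbf{x}^*$, $D^{-1}=D$, and $A(G_{\pi'})=DA(G_\pi)D$ is exactly the standard switching argument behind that fact. Your handling of the zero coordinates of $\mathbf{x}$ (noting the choice of $d_i$ there is immaterial, and that $G_{\pi'}$ may consequently be non-unique) is the right point to flag and is dealt with correctly.
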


If $G_{\pi}$ is switching equivalent to $G_{+}$, then $G_{\pi}$ is called a \emph{balanced} signed graph.
Stani\'{c} \cite{STANIC201980} gave the following upper bound for the largest eigenvalue of $G_{\pi}$.
\begin{lem}\cite[Theorem 3.1]{STANIC201980}\label{lemma2}
For any signed graph $G_{\pi}$, there exists a balanced spanning subgraph $H_{\tilde{\pi}}$ of $G_{\pi}$ such that  $\lambda_1(G_{\pi}) \leq \lambda_1(H_{\tilde{\pi}}) = \rho(H)$.
\end{lem}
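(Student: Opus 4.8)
The plan is to prove the bound through the variational (Rayleigh) characterization of the largest eigenvalue together with a monotone edge-deletion argument. Write $\lambda_1=\lambda_1(G_\pi)$ and let $\mathbf{x}$ be a unit eigenvector of $A(G_\pi)$ for $\lambda_1$. The starting point is the identity
\[
\lambda_1=\mathbf{x}^{\mathsf{T}}A(G_\pi)\mathbf{x}=\sum_{\{i,j\}\in E}2\,\pi(i,j)\,x_i x_j,
\]
which splits the edges into those contributing positively and those contributing negatively to $\lambda_1$, according to the sign of $\pi(i,j)x_ix_j$. The idea is that discarding the negatively contributing edges can only raise the quadratic form, while the surviving subgraph can be arranged to be balanced.

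First I would reduce to the case of a nonnegative eigenvector. Applying Lemma \ref{lem2.2} to the eigenpair $(\lambda_1,\mathbf{x})$ produces a signed graph $G_{\pi'}$, switching equivalent to $G_\pi$, carrying the eigenpair $(\lambda_1,\mathbf{x}^{*})$ with $\mathbf{x}^{*}=(|x_i|)\ge 0$. Since switching preserves the underlying graph, the spectrum, and the property of being balanced, it suffices to build the desired balanced spanning subgraph inside $G_{\pi'}$ and then transport it back.

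Next I would delete the ``bad'' edges. Let $H$ be the spanning subgraph of $G_{\pi'}$ whose edge set consists of the positive edges (those with $\pi'(i,j)=+1$), equipped with the restricted sign $\tilde\pi\equiv+1$; then $H_{\tilde\pi}$ is all-positive and hence balanced. Because $\mathbf{x}^{*}\ge 0$, every deleted edge had $\pi'(i,j)=-1$ and so contributed $-2x_i^{*}x_j^{*}\le 0$, whence deletion does not decrease the form:
\[
\lambda_1=\sum_{\{i,j\}\in E}2\,\pi'(i,j)\,x_i^{*}x_j^{*}\ \le\ \sum_{\{i,j\}\in E(H)}2\,x_i^{*}x_j^{*}=(\mathbf{x}^{*})^{\mathsf{T}}A(H_{\tilde\pi})\,\mathbf{x}^{*}.
\]
As $\mathbf{x}^{*}$ is a unit vector, the Rayleigh bound gives $(\mathbf{x}^{*})^{\mathsf{T}}A(H_{\tilde\pi})\mathbf{x}^{*}\le\lambda_1(H_{\tilde\pi})$, and since $H_{\tilde\pi}$ is balanced it is switching equivalent to $H_+$, whose adjacency matrix is the nonnegative adjacency matrix of $H$; by Perron--Frobenius $\lambda_1(H_{\tilde\pi})=\lambda_1(H_+)=\rho(H)$. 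This closes the chain $\lambda_1(G_\pi)\le\lambda_1(H_{\tilde\pi})=\rho(H)$.

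The step I expect to be the main obstacle is the bookkeeping through switching, namely verifying that the balanced subgraph constructed inside $G_{\pi'}$ really transports to a genuine signed spanning subgraph of $G_\pi$. Concretely, if $A(G_{\pi'})=DA(G_\pi)D$ for a $\pm1$ diagonal $D$, then for every surviving edge one reads off $\pi(i,j)=D_{ii}D_{jj}$, so the same edge set $E(H)$ with the original sign $\pi$ restricted is again switching equivalent to all-positive, hence balanced, with unchanged underlying graph $H$ and unchanged spectrum; this yields the assertion for $G_\pi$ itself. A secondary subtlety is the treatment of vertices with $x_i=0$: the edges incident to them contribute nothing and may be discarded without affecting either the quadratic form or the balance of $H_{\tilde\pi}$, and it is precisely Lemma \ref{lem2.2} that removes the sign ambiguity these vanishing entries would otherwise create.
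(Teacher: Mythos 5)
Your proof is correct, and in fact the paper offers no proof of Lemma \ref{lemma2} at all (it is quoted from Stani\'{c}); your argument --- switch to a nonnegative eigenvector via Lemma \ref{lem2.2}, delete the negative edges, and compare Rayleigh quotients, with $\lambda_1(H_{\tilde\pi})=\rho(H)$ by balance and Perron--Frobenius --- is precisely the standard proof and the same technique the paper itself uses to prove Proposition \ref{pro7}. The switching bookkeeping you flag is handled correctly: since $A(G_{\pi'})=DA(G_{\pi})D$ acts edgewise, the spanning subgraph of $G_{\pi}$ on the surviving edge set with the original signs is again switching equivalent to the all-positive $H_{+}$, hence balanced with unchanged spectrum.
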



Let $G$ be a connected graph. If $G_{\pi}$ is balanced, it is clear that the spanning subgraph $H$ is trivial in Lemma \ref{lemma1}, i.e., $H=G$.
If $G_{\pi}$ is not balanced, we can get the following observation.

\begin{pro}\label{pro7}
Let $G$ be a connected graph.
If $G_{\pi}$ is not balanced, there exists a spanning proper subgraph $H$ of $G$ such that $\lambda_1(G_{\pi}) < \rho(H)$.
\end{pro}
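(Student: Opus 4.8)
The plan is to read off the desired subgraph $H$ from a top eigenvector of $G_\pi$, exploiting the principle (Lemmas \ref{lemma1} and \ref{lemma2}) that the all-positive signing maximizes the spectral radius. First I would apply Lemma \ref{lem2.2} to replace $G_\pi$ by a switching-equivalent $G_{\pi'}$ carrying a nonnegative eigenvector $\mathbf{x}^*\ge 0$ for $\lambda_1(G_\pi)$; switching preserves both the spectrum and the property of being (un)balanced, so $G_{\pi'}$ is still unbalanced and in particular has at least one negative edge. Let $S$ be the support of $\mathbf{x}^*$. Restricting the eigenequation to $S$ shows that $\mathbf{x}^*|_S>0$ is a strictly positive eigenvector of the principal submatrix $A(G_{\pi'}[S])$ for $\lambda_1(G_\pi)$, and Cauchy interlacing pins this down as $\lambda_1(G_\pi)=\lambda_1(G_{\pi'}[S])$. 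Comparing $A(G_{\pi'}[S])$ with its entrywise modulus $A(G[S])$ in Perron--Frobenius fashion then gives $\lambda_1(G_\pi)\le\rho(G[S])$, with equality (by the mechanism behind Lemma \ref{lemma1}) exactly when every edge inside $S$ is positive.

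Next I would split on the support. If $S=V$, take $H$ to be the spanning subgraph consisting of the positive edges of $G_{\pi'}$; it is proper because $G_{\pi'}$ has a negative edge, and the Rayleigh estimate $\rho(H)\ge \mathbf{x}^{*\top}A(H)\mathbf{x}^*/\|\mathbf{x}^*\|^2=\lambda_1(G_\pi)+\big(\sum_{\{i,j\}\ \mathrm{neg}}2x^*_ix^*_j\big)/\|\mathbf{x}^*\|^2$ is strict, since a negative edge now joins two vertices with positive $\mathbf{x}^*$-value. If instead $S\ne V$ and some edge inside $S$ is negative, then the comparison above is strict, $\lambda_1(G_\pi)<\rho(G[S])$, and I take $H$ to be the spanning subgraph with edge set $E(G[S])$: here $\rho(H)=\rho(G[S])>\lambda_1(G_\pi)$, and $H$ is proper because connectivity of $G$ forces an edge of $G$ to leave the proper nonempty set $S$.

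The remaining case, $S\ne V$ with every edge inside $S$ positive (so equality $\lambda_1(G_\pi)=\rho(G[S])$ holds), is the main obstacle, and here I would enlarge the induced subgraph by a single crossing edge. Choose an edge $\{a,c\}$ with $a\in S$, $c\notin S$ (which exists by connectivity) and set $E(H)=E(G[S])\cup\{\{a,c\}\}$. Since $\mathbf{x}^*|_S>0$ is a Perron vector of $A(G[S])$, every component of $G[S]$ attains the radius $\rho(G[S])$ with positive Perron value at $a$, so attaching the new vertex $c$ to $a$ strictly increases the radius and $\rho(H)>\rho(G[S])=\lambda_1(G_\pi)$. The delicate point is properness: $H=G$ can occur only if $\{a,c\}$ is the unique edge of $G$ leaving $S$ and $c$ is a lone pendant vertex, but then $G_{\pi'}$ has all edges positive except the pendant edge $\{a,c\}$, which can be switched positive at $c$, making $G_{\pi'}$ balanced and contradicting unbalancedness. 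Hence $H$ is a proper spanning subgraph with $\rho(H)>\lambda_1(G_\pi)$ in every case, which is what I want to prove.
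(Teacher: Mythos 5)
Your proof is correct, and it takes a genuinely different route from the paper's. The paper also starts from Lemma \ref{lem2.2} (a nonnegative unit top eigenvector $\mathbf{x}$ after switching), but then commits to a single witness: $H$ is the spanning subgraph of all \emph{positive} edges of $G_{\pi}$ (proper, since an unbalanced signing must carry a negative edge), and the chain $\lambda_1(G_{\pi})=2\sum_{i\overset{+}{\sim}j}x_ix_j-2\sum_{i\overset{-}{\sim}j}x_ix_j\le 2\sum_{i\overset{+}{\sim}j}x_ix_j\le\rho(H)$ is closed by an equality-case analysis: if both inequalities were tight, the zero set $V_0$ of $\mathbf{x}$ would separate positive from negative edges, and switching at $V_0$ would exhibit $G_{\pi}$ as balanced, a contradiction. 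You instead trichotomize on the support $S$ of the eigenvector, use interlacing to pin $\lambda_1(G_{\pi})$ down as the top eigenvalue of $G_{\pi'}[S]$, and choose a different $H$ in each case; the novel move is your third case, where $H$ is $G[S]$ augmented by one crossing edge and strictness comes from the fact that properly enlarging a connected graph strictly increases its spectral radius (note that your $H$ there may even contain a negative edge of $G_{\pi'}$, which is harmless since only the underlying unsigned graph enters $\rho(H)$). What the paper's argument buys is brevity and a canonical subgraph in all cases; what yours buys is that each strict inequality is established directly (a strict Rayleigh gain when a negative edge meets the support, strict growth under attaching a pendant vertex), and the vanishing coordinates of the eigenvector are controlled explicitly — precisely the point the paper's equality case treats most tersely, since its deductions that $x_ix_j>0$ on every positive edge and that switching at $V_0$ flips every negative edge implicitly assume no positive (resp.\ no negative) edge lies entirely inside the zero set, degenerate configurations that your case split rules out or handles head-on. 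Your pendant-edge switching argument for properness in the boundary subcase is also sound.
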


\begin{proof}
Let $\mathbf{x}=(x_v)$ be a unit eigenvector corresponding to $\lambda_1(G_{\pi})$.
Without loss of generality, we  assume that the entries  of $\mathbf{x}$ are non-negative; otherwise, we consider the switching equivalent signed graph of $G_{\pi}$ mentioned in Lemma \ref{lem2.2}.
Since $G_{\pi}$ is not balanced, then $G_{\pi}$ contains at least one negative edge.
Let $H$ be the spanning proper subgraph obtained from $G$ by deleting all negative edges of $G_{\pi}$. 
We use ${i \overset{+}{\sim} j}$ (resp. ${i \overset{-}{\sim} j}$) to denote the vertices $i$ and $j$ are adjacent with positive (resp. negative) edges in $G_{\pi}$.
Then we have that
\begin{align}\label{eq1}
\lambda_1(G_{\pi}) =2 \sum_{i \overset{+}{\sim} j}x_ix_j - 2\sum_{i \overset{-}{\sim} j}x_ix_j  \leq 2 \sum_{i \overset{+}{\sim} j}x_ix_j \leq \rho(H).
\end{align}
Assume that all  equalities in \eqref{eq1} hold, then $x_ix_j>0$ when ${i \overset{+}{\sim} j}$ and $x_ix_j=0$ when ${i \overset{-}{\sim} j}$.
Let $V_0=\{v \in V(G) :x_v=0\}$.
Thus, an edge in $G_{\pi}$ is negative if and only if it is incident to a vertex in $V_0$.
Let $D_{V_0}=(d_{ii})$ be a diagonal matrix, where $d_{ii}=-1$ if $i \in V_0$ and $d_{ii}=1$ if $i \notin V_0$.
It is following that $A(G_+)=D_{V_0}^{-1}A(G_{\pi})D_{V_0}$, which contradicts that $G_{\pi}$ is not balanced.
\end{proof}

Let $\Pi$ denote the set of all sign functions on $E$. Let $\Gamma(G)$ denote the set of signed graphs on $G$ with the spectral radii  less than $\rho(G)$, i.e., let
\begin{align*}
\Gamma(G)=\{G_{\pi}: \mbox{$\pi  \in \Pi$ such that $\rho(G_{\pi})<\rho(G) $} \}.
\end{align*}

\begin{pro}\label{pro2}
Let $G=(V,E)$ be a connected graph. The set $\Gamma(G)$ is empty if and only if $G$ is a tree or an odd-unicyclic graph.
\end{pro}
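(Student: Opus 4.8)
The plan is to translate the statement into a counting problem about switching classes. By Lemma \ref{lemma1}, since $G$ is connected we have $\rho(G_{\pi}) < \rho(G)$ if and only if $G_{\pi}$ is \emph{not} switching equivalent to $G_{+}$ or $G_{-}$. Hence $\Gamma(G)$ is empty if and only if \emph{every} signing $G_{\pi}$ is switching equivalent to $G_{+}$ or to $G_{-}$. This reduces the whole claim to a purely combinatorial question: when do the two switching classes $[G_{+}]$ and $[G_{-}]$ exhaust all switching classes of signings of $G$?

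To answer this I would invoke the standard description of switching equivalence (Zaslavsky's theory): for a connected graph, two signings $G_{\pi}, G_{\pi'}$ are switching equivalent if and only if they assign the same product of edge signs $\prod_{e \in C}\pi(e)$ to every cycle $C$ of $G$. Consequently the switching classes are in bijection with the characters of the cycle space, so their number is exactly $2^{\beta}$, where $\beta=|E|-|V|+1$ is the cyclomatic number of $G$. Since $\{[G_{+}],[G_{-}]\}$ has at most two elements, a necessary condition for $\Gamma(G)$ to be empty is $2^{\beta}\le 2$, i.e. $\beta\le 1$. In particular, if $\beta\ge 2$ there is a switching class realized by neither $G_{+}$ nor $G_{-}$; any signing in that class then lies in $\Gamma(G)$, so $\Gamma(G)\neq\emptyset$. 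This eliminates everything except trees ($\beta=0$) and unicyclic graphs ($\beta=1$).

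It then remains to treat the two surviving cases by a parity argument. If $G$ is a tree, there are no cycles, there is a single switching class (namely $[G_{+}]$), and $\Gamma(G)=\emptyset$. If $G$ is unicyclic with unique cycle $C$ of length $\ell$, there are exactly two switching classes, distinguished by the sign assigned to $C$. The all-positive signing gives $C$ sign $+1$, while $G_{-}$ gives $C$ sign $(-1)^{\ell}$. When $\ell$ is odd these signs differ, so $[G_{+}]$ and $[G_{-}]$ are the two distinct classes, every signing is covered, and $\Gamma(G)=\emptyset$. When $\ell$ is even, $G_{-}$ is balanced as well, so $[G_{+}]=[G_{-}]$ is a single class and the class with $C$ negative is never attained; any signing in that class belongs to $\Gamma(G)$, giving $\Gamma(G)\neq\emptyset$. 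Combining the cases yields exactly "tree or odd-unicyclic."

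The main obstacle is justifying the bijection between switching classes and cycle signs together with the count $2^{\beta}$; the rest is bookkeeping on the parity of $\ell$. I would either cite this as a classical fact or supply the short self-contained argument: switching on a vertex subset $S$ flips precisely the edges of the cut $(S,V\setminus S)$, hence preserves every cycle sign; conversely, fixing a spanning tree $T$, one can switch any signing so that all tree edges become positive, after which the signs of the fundamental cycles determine the remaining edge signs, showing that signings with identical cycle signs differ only by such a cut. Once this is in hand, Lemma \ref{lemma1} does the rest of the work.
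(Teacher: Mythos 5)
Your proof is correct, and it takes a genuinely different route from the paper's, even though both are counting arguments built on the same reduction: by Lemma \ref{lemma1}, $\Gamma(G)$ is empty if and only if every signing of $G$ is switching equivalent to $G_{+}$ or $G_{-}$. From there the paper counts the \emph{sizes} of these two classes: since $G$ is connected, $D^{-1}A(G)D=A(G)$ forces $D=\pm I$, so $|\Pi_{+}|=|\Pi_{-}|=2^{|V|-1}$; comparing with $|\Pi|=2^{|E|}$ yields $|E|=|V|-1$ when $G$ is bipartite (where $\Pi_{+}=\Pi_{-}$) and $|E|=|V|$ otherwise, i.e., tree or odd-unicyclic. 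You instead count the \emph{number} of switching classes via Zaslavsky's correspondence between switching classes and cycle signs, obtaining $2^{\beta}$ classes with $\beta=|E|-|V|+1$, and then resolve the surviving cases $\beta\le 1$ by the parity of the unique cycle. The two computations are equivalent (each class has size $2^{|V|-1}$, so $2^{|E|}/2^{|V|-1}=2^{\beta}$), but your version buys two things: it treats both implications explicitly --- the paper's write-up only spells out the direction ``$\Gamma(G)$ empty $\Rightarrow$ tree or odd-unicyclic,'' leaving the converse to the same count read backwards --- and it exhibits concretely which signings witness $\Gamma(G)\neq\emptyset$, namely those assigning sign $-1$ to some cycle in a way not realized by $G_{+}$ or $G_{-}$. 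The cost is the appeal to the switching-class/cycle-sign bijection, which the paper's self-contained argument avoids; your spanning-tree justification of that bijection (switching flips exactly the edges of a cut, and any signing can be switched to make a spanning tree all-positive, after which fundamental cycle signs determine the rest) is standard and adequate, so nothing is missing.
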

\begin{proof}
Let $\mathcal{D}$ be the set of all $|V | \times |V |$ diagonal matrices with diagonal entries $\pm1$.
Let $\Pi_{+}$ and $\Pi_{-}$ denote the sets of signed function on $E$  such that $A(G_{\pi})=D^{-1}A(G_{+})D$ and $A(G_{\pi})=D^{-1}A(G_{-})D$ for some $D \in \mathcal{D}$.
Since $G$ is connected, $D^{-1}A(G)D=A(G)$ implies that $D={I}$ or $D={-I}$. It implies that there is a two-one correspondence between $\mathcal{D}$ and $\Pi_{+}$ and
similarly between $\mathcal{D}$ and $\Pi_{-}$. Thus
\begin{equation*}
  |\Pi_{+}|=|\Pi_{-}|=\frac{|\mathcal{D}|}{2}=2^{|V|-1}.
\end{equation*}

Since the set $\Gamma(G)$ is empty, we have $\rho(G_{\pi})=\rho(G)$ for all $\pi \in \Pi$.
From Lemma \ref{lemma1}, we see that $|\Pi|=|\Pi_{+}|=|\Pi_{-}|$ if $G$ is bipartite and $|\Pi|=|\Pi_{+}| + |\Pi_{-}|$ if $G$ is not bipartite. It implies that the number of signed functions is
\begin{align*}
 2^{|E|}=\begin{cases}
             2^{|V|-1}, & \mbox{if $G$ is bipartite, } \\
             2^{|V|}, & \mbox{otherwise}.
            \end{cases}
\end{align*}
Hence, $G$ is a tree or a unicyclic graph with an odd-cycle.
\end{proof}

Let $G(m)$ be the set of all connected un-labeled subgraphs of $G$ with at most $m$ edges.
For $\widehat{G} \in G(m)$, let $N_{G}(\widehat{G})$ denote the number of subgraphs of $G$ isomorphic to $\widehat{G}$.
A \emph{parity-closed walk} in $G$ is a closed walk that uses each edge an even number of times.
Let ${P_{d}(G)}$ be the number of parity closed walks of length $d$ in $G$.
A closed walk in $\widehat{G}$ is called \emph{covering} if it uses each edge at least once.
Let $p_{\ell}(\widehat{G})$ be the number of covering parity-closed of  length $\ell$ in $\widehat{G}$.
We observe that ${P_{d}(G)}$  has a natural decomposition as
\begin{align}\label{eq6}
 P_{d}(G)= \sum_{\widehat{G} \in G(\frac{d}{2})} p_{d}(\widehat{G})N_{G}(\widehat{G}).
\end{align}

The $d$-th order \emph{spectral moment} $\mathrm{S}_d(G)$ of a graph $G$ is the sum of $d$-th powers of all eigenvalues of $G$,
which also applies to signed graphs and hypergraphs.
It has been demonstrated that $P_d(G)$ is the arithmetic mean of the spectral moments of signed graphs with underlying graph $G$ \cite{CHEN2024105909}.

\begin{lem} \cite[Theorem 3.1]{CHEN2024105909}\label{lem5}
  Let $G=(V,E)$ be a connected graph. Then
 \begin{align*}
 P_{d}(G)= 2^{-|E|}\sum_{\pi \in \Pi} \mathrm{S}_{d}(G_{\pi}) .
\end{align*}
\end{lem}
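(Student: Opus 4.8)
The plan is to expand each spectral moment as the trace of a power of the signed adjacency matrix, interchange the order of summation (summing over the sign functions first), and then observe that a product of edge signs averages to zero over all of $\Pi$ unless every edge is traversed an even number of times. This reduces the averaged spectral moment to a weighted count of parity-closed walks.

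First I would use that $\mathrm{S}_d(G_\pi)=\sum_i \lambda_i(G_\pi)^d=\mathrm{tr}\bigl(A(G_\pi)^d\bigr)$. Expanding the trace over all closed walks $W=(v_0,v_1,\dots,v_{d-1},v_d=v_0)$ of length $d$ in $G$, each term is a product of entries of $A(G_\pi)$, and since $A_{ij}(G_\pi)=\pi(i,j)$ on edges, a walk traversing edge $e$ exactly $m_e(W)$ times contributes the product of the corresponding signs:
\begin{align*}
\mathrm{S}_d(G_\pi)=\sum_{W}\ \prod_{t=0}^{d-1} A_{v_t v_{t+1}}(G_\pi)=\sum_{W}\ \prod_{e \in E}\pi(e)^{m_e(W)},
\end{align*}
where $W$ ranges over the closed walks of length $d$ in $G$ (a set that does not depend on $\pi$, since the underlying graph is fixed).

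Next I would sum over $\pi \in \Pi$ and swap the order of summation. Because a sign function assigns $\pm 1$ to each edge independently, the inner sum factorizes over $E$, and each edge contributes $1+(-1)^{m_e(W)}$:
\begin{align*}
\sum_{\pi \in \Pi}\mathrm{S}_d(G_\pi)=\sum_{W}\ \prod_{e \in E}\Bigl(\sum_{\pi(e)\in\{+1,-1\}}\pi(e)^{m_e(W)}\Bigr)=\sum_{W}\ \prod_{e \in E}\bigl(1+(-1)^{m_e(W)}\bigr).
\end{align*}
This factor equals $2$ when $m_e(W)$ is even and $0$ when $m_e(W)$ is odd, so the product vanishes unless $W$ uses every edge an even number of times, i.e. unless $W$ is parity-closed. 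For such a walk every edge---including the untraversed ones, for which $m_e=0$ is even---contributes the factor $2$, giving $2^{|E|}$. Equivalently, $\pi\mapsto\prod_e\pi(e)^{m_e(W)}$ is a character of the group $\Pi\cong(\mathbb{Z}/2)^{E}$, which sums to $0$ unless it is trivial (all $m_e$ even), in which case it sums to $|\Pi|=2^{|E|}$.

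Collecting the surviving terms, only parity-closed walks contribute, each with weight $2^{|E|}$, so $\sum_{\pi \in \Pi}\mathrm{S}_d(G_\pi)=2^{|E|}P_d(G)$, and dividing by $2^{|E|}$ gives the claim. I do not expect a genuine obstacle here, as the identity is an elementary character-orthogonality computation; the only point needing care is the bookkeeping in the factorization, namely that the index set of walks is independent of $\pi$ and that the edges not used by $W$ still each contribute a factor $2$, so that the surviving multiplicity is exactly $2^{|E|}$ rather than a count depending on the support of $W$.
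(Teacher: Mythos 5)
Your proof is correct: expanding $\mathrm{S}_d(G_\pi)=\mathrm{tr}\bigl(A(G_\pi)^d\bigr)$ over closed walks, swapping the sums, and using that $\sum_{\pi(e)\in\{\pm1\}}\pi(e)^{m_e(W)}$ is $2$ or $0$ according to the parity of $m_e(W)$ (with untraversed edges contributing $2$ as well) yields exactly $\sum_{\pi\in\Pi}\mathrm{S}_d(G_\pi)=2^{|E|}P_d(G)$. The paper itself states this lemma without proof, citing \cite[Theorem 3.1]{CHEN2024105909}, and your character-orthogonality/trace-expansion computation is essentially the same argument used in that reference, so there is nothing to add; note only that connectedness of $G$ plays no role in your derivation, which proves the identity for arbitrary graphs.
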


Note that  $\lim_{\ell\rightarrow\infty}\frac{\sum_{\pi \in \Pi} \mathrm{S}_{2\ell}(G_{\pi})}{\rho(G)^{2\ell}}$ is the total multiplicity of eigenvalues of all signed graphs  on  $G$ whose modulus is equal to $\rho(G)$.
Based on  Lemma \ref{lem5}, the following result is derived in \cite{CHEN2024105909}.

\begin{lem}\cite[Lemma 7.1]{CHEN2024105909}\label{lem4}
  Let $G=(V,E)$ be a connected graph. Then $$\lim_{\ell\rightarrow\infty}\frac{P_{2\ell}(G)}{\rho(G)^{2\ell}}=\lim_{\ell\rightarrow\infty}\frac{p_{2\ell}(G)}{\rho(G)^{2\ell}}=2^{|V|-|E|}.$$
\end{lem}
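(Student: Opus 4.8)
The plan is to derive both equalities from the spectral-moment identity of Lemma~\ref{lem5}. Setting $d=2\ell$ there gives
\[
\frac{P_{2\ell}(G)}{\rho(G)^{2\ell}}=2^{-|E|}\sum_{\pi\in\Pi}\frac{\mathrm{S}_{2\ell}(G_{\pi})}{\rho(G)^{2\ell}}
=2^{-|E|}\sum_{\pi\in\Pi}\sum_{i=1}^{|V|}\left(\frac{\lambda_i(G_{\pi})}{\rho(G)}\right)^{2\ell}.
\]
Since the sum over $\pi$ is finite I would interchange it with the limit in $\ell$. By Lemma~\ref{lemma1} every eigenvalue satisfies $|\lambda_i(G_{\pi})|\le\rho(G_{\pi})\le\rho(G)$, so each even power $\bigl(\lambda_i(G_{\pi})/\rho(G)\bigr)^{2\ell}$ tends to $1$ when $|\lambda_i(G_{\pi})|=\rho(G)$ and to $0$ otherwise. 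Hence $\lim_{\ell\to\infty}P_{2\ell}(G)/\rho(G)^{2\ell}=2^{-|E|}\sum_{\pi\in\Pi}M(G_{\pi})$, where $M(G_{\pi})$ is the number of eigenvalues of $G_{\pi}$ of modulus exactly $\rho(G)$ (this is the total multiplicity identified in the remark preceding Lemma~\ref{lem5}).

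The key computation is then $\sum_{\pi\in\Pi}M(G_{\pi})=2^{|V|}$. By Lemma~\ref{lemma1}, $M(G_{\pi})>0$ forces $G_{\pi}$ to be switching equivalent to $G_{+}$ or $G_{-}$, so only signings in $\Pi_{+}\cup\Pi_{-}$ contribute, with $|\Pi_{+}|=|\Pi_{-}|=2^{|V|-1}$ as established in the proof of Proposition~\ref{pro2}. As switching preserves the spectrum, I would combine the Perron--Frobenius theorem (connectedness makes $\rho(G)$ a simple eigenvalue of $G_{+}$) with the standard fact that $-\rho(G)$ is an eigenvalue of $G$ if and only if $G$ is bipartite. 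If $G$ is bipartite then $\Pi_{+}=\Pi_{-}$ (switch along the bipartition) and each of its $2^{|V|-1}$ members has $M=2$; if $G$ is non-bipartite then $\Pi_{+}\cap\Pi_{-}=\varnothing$ and each of the $2\cdot 2^{|V|-1}$ members has $M=1$. Either way the total is $2^{|V|}$, which yields the first equality $\lim_{\ell\to\infty}P_{2\ell}(G)/\rho(G)^{2\ell}=2^{|V|-|E|}$.

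For the second equality I would pass through the decomposition \eqref{eq6}. For $\ell\ge|E|$ the index set $G(\ell)$ is exactly the collection of all connected subgraphs of $G$, and the term $\widehat{G}=G$ (with $N_{G}(G)=1$) contributes precisely $p_{2\ell}(G)$, so
\[
\frac{P_{2\ell}(G)}{\rho(G)^{2\ell}}=\frac{p_{2\ell}(G)}{\rho(G)^{2\ell}}+\sum_{\substack{\widehat{G}\in G(\ell)\\ \widehat{G}\neq G}}\frac{p_{2\ell}(\widehat{G})}{\rho(G)^{2\ell}}\,N_{G}(\widehat{G}).
\]
The remaining step is to show each summand on the right vanishes. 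Since a covering parity-closed walk is in particular a closed walk, $p_{2\ell}(\widehat{G})\le\mathrm{tr}\bigl(A(\widehat{G})^{2\ell}\bigr)=\mathrm{S}_{2\ell}(\widehat{G})\le|V(\widehat{G})|\,\rho(\widehat{G})^{2\ell}$; and for a connected proper subgraph $\widehat{G}\neq G$ the irreducibility of $A(G)$ gives $\rho(\widehat{G})<\rho(G)$, so $p_{2\ell}(\widehat{G})/\rho(G)^{2\ell}\to 0$. As the sum ranges over the finitely many proper connected subgraph types, the correction term tends to $0$, whence the second limit exists and equals the first, namely $2^{|V|-|E|}$.

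I expect the main obstacle to be the uniform evaluation $\sum_{\pi\in\Pi}M(G_{\pi})=2^{|V|}$: the bipartite and non-bipartite regimes distribute the modulus-$\rho(G)$ eigenvalues across $\Pi_{+}$ and $\Pi_{-}$ in opposite ways, and one must marry the counts $|\Pi_{\pm}|=2^{|V|-1}$ with the coincidence $\Pi_{+}=\Pi_{-}$ (bipartite) versus the disjointness $\Pi_{+}\cap\Pi_{-}=\varnothing$ (non-bipartite) to see that the total is the same in both cases. The second equality is more routine once the strict subdominance $\rho(\widehat{G})<\rho(G)$ is invoked.
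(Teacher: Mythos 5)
Your proof is correct and follows essentially the route the paper indicates: Lemma \ref{lem4} is quoted from \cite{CHEN2024105909} without proof, but the remark preceding it sketches exactly your argument --- apply Lemma \ref{lem5} and identify the limit as the total multiplicity, over all signings $\pi\in\Pi$, of eigenvalues of modulus $\rho(G)$, which Lemma \ref{lemma1} together with the counts $|\Pi_{+}|=|\Pi_{-}|=2^{|V|-1}$ (and the bipartite coincidence $\Pi_{+}=\Pi_{-}$ versus non-bipartite disjointness) evaluates to $2^{|V|}$ in both cases. Your handling of the second equality via the decomposition \eqref{eq6} and the strict subdominance $\rho(\widehat{G})<\rho(G)$ for proper connected subgraphs likewise matches the mechanism the paper itself reuses (e.g., in Part 1 of Theorem \ref{multiplicity}).
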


\subsection{The spectra of hypergraphs}

For a positive integer $n$, let $\left[ n \right] = \left\{ {1, \ldots ,n} \right\}$.
A $k$-order $n$-dimensional complex tensor $T= \left( {{t_{{i_1} \cdots {i_k}}}} \right) $ is a multidimensional array with $n^k$ entries in complex number field $\mathbb{C}$, where ${i_j} \in \left[ n \right]$, $j = 1, \ldots ,k$.
For $\mathbf{x} = {\left( {{x_1}, \ldots ,{x_n}} \right)^{\top}} \in {\mathbb{C}^n}$,  let  ${\mathbf{x}^{\left[ {k - 1} \right]}} = {\left( {x_1^{k - 1}, \ldots ,x_n^{k - 1}} \right)^{\top}}$. The $i$-th component of the vector $T{\mathbf{x}^{k - 1}} \in \mathbb{C}^{n}$ is defined as
\[{\left( {T{\mathbf{x}^{k - 1}}} \right)_i} = \sum\limits_{{i_2}, \ldots ,{i_k}=1}^n {{a_{i{i_2} \cdots {i_k}}}{x_{{i_2}}} \cdots {x_{{i_k}}}} .\]
If there exists a nonzero vector $\mathbf{x}$ such that $T{\mathbf{x}^{k - 1}} = \lambda {\mathbf{x}^{\left[ {k - 1} \right]}}$, then $\lambda$ is called an \emph{eigenvalue} of $T$ and $\mathbf{x}$ is an \emph{eigenvector} of $T$ corresponding to $\lambda$ \cite{lim2005singular,qi2005eigenvalues}.
The \emph{characteristic polynomial} of $T$ is defined as the resultant of the polynomial system $(\lambda \mathbf{x}^{[k-1]}-T\mathbf{x}^{k-1})$ \cite{qi2005eigenvalues}.
The algebraic multiplicity of an eigenvalue is the multiplicity as a root of the characteristic polynomial.

A hypergraph $H=(V_H,E_H)$ is called \emph{$k$-uniform} if each edge of $H$ contains exactly $k$ vertices.
Similar to the relation between graphs and matrices, there is a natural correspondence between uniform hypergraphs and tensors.
For a $k$-uniform hypergraph $H$ with $n$ vertices, its \emph{adjacency tensor} ${A}_H=(a_{i_1i_2\ldots i_k})$ is a $k$-order $n$-dimensional tensor, where
\[{a_{{i_1}{i_2} \ldots {i_k}}} = \begin{cases}
                                    \frac{1}{{\left( {k - 1} \right)!}}, & \mbox{if $ \left\{ {{i_1},{i_2},\ldots ,{i_k}} \right\} \in {E_H}$}, \\
                                    0, & \mbox{otherwise}.
                                  \end{cases}\]
When $k=2$, ${A}_H$ is the usual adjacency matrix of the graph $H$.
The spectrum of the adjacency tensor $A_H$ is called the spectrum of hypergraph $H$.

Set $V_H=[n]$.
For $\mathbf{x}=(x_1,x_2, \ldots,x_n)^{\top}$, and let $x^{S}=\prod_{s\in S}{x_s}$ for $S \subseteq V_H$.
By $E_i(H)$, we denote the set of hyperedges containing $i$.
Let $\lambda$ be an eigenvalue of $H$, and let $\mathcal{V}_{\lambda}=\mathcal{V}_{\lambda}(H)$ be the set of all eigenvectors of $H$ corresponding to $\lambda$ together with zero, i.e.
\begin{align}\label{eqvariety}
 \mathcal{V}_{\lambda}=\{\mbox{$\mathbf{x} \in \mathbb{C}^n: \sum_{e\in E_i(H) } x^{e \setminus\{i\}}=\lambda x_i^{k-1}$ for all $i \in V_H$}\}.
\end{align}
Observe that the system of equations in \eqref{eqvariety}  is not linear yet for $k \geq 3$, and therefore $\mathcal{V}_{\lambda}$ is not a linear subspace of $\mathbb{C}^n$ in general. In fact, $\mathcal{V}_{\lambda}$ forms an affine variety in $\mathbb{C}^n$ \cite{usingalgebraic}.
The projective eigenvariety of $H$ associated with $\lambda$ is defined to be the projective variety (\cite{fan2019dimension})
\begin{align*}
 \mathbb{V}_{\lambda}=\{\mbox{$\mathbf{x} \in \mathbb{P}^{n-1}: \sum_{e\in E_i(H) } x^{e \setminus\{i\}}=\lambda x_i^{k-1}$ for all $i \in V_H$}\},
\end{align*}
where $\mathbb{P}^{n-1}$ is the complex projective spaces over $\mathbb{C}$ of dimension $n-1$.

Let $k \geq 3$.
Recall that the $k$-power hypergraph $G^{(k)}$ is the $k$-uniform hypergraph that is obtained by adding $k-2$ new vertices to each edges of a graph $G$.
All the distinct eigenvalues of  $G^{(k)}$ are derived by eigenvalues of signed subgraphs of $G$ as follows \cite{CHEN2023205}.
\begin{thm}\cite[Theorem 1.2]{CHEN2023205}\label{thm1}
The complex number $\lambda$ is an eigenvalue of $G^{(k)}$ if and only if
  \begin{enumerate}
\renewcommand{\labelenumi}{(\alph{enumi})}
  \item some signed induced subgraph of $G$ has an eigenvalue $\sigma$ such that $\sigma^2 = \lambda^k$, when $k = 3$;
  \item some signed subgraph of $G$ has an eigenvalue $\sigma$ such that $\sigma^2 = \lambda^k$, when $k \geq 4$;
\end{enumerate}
\end{thm}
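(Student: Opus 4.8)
The plan is to solve out the added vertices from the eigenvalue system of $G^{(k)}$, reducing it to the eigen-equation of a signed graph on $V(G)$. Write the hyperedge replacing $e=\{u,v\}\in E$ as $e\cup W_e$ with $W_e=\{w^e_1,\dots,w^e_{k-2}\}$ the added vertices, and abbreviate $x_u,x_v$ and $z_j:=x_{w^e_j}$. Then the system \eqref{eqvariety} splits into two families: for each added vertex,
\begin{equation*}
x_ux_v\prod_{l\neq j}z_l=\lambda z_j^{\,k-1},\qquad j=1,\dots,k-2,
\end{equation*}
and for each original vertex $u$, writing $P_e:=\prod_{l=1}^{k-2}z_l$,
\begin{equation*}
\sum_{v:\,uv\in E}x_v\,P_{uv}=\lambda x_u^{\,k-1}.
\end{equation*}

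First I would treat the main case $\lambda\neq0$ on full support. Multiplying the $j$-th added-vertex equation by $z_j$ gives $\lambda z_j^{\,k}=x_ux_vP_e$ independently of $j$; taking the product over $j$ then yields $P_e^{\,2}=(x_ux_v/\lambda)^{k-2}$, so $P_e$ is pinned down up to a sign. Setting $\xi_u:=x_u^{\,k/2}$ and using $(x_ux_vP_{uv})^2=(\xi_u\xi_v)^2/\lambda^{\,k-2}$, I would multiply the $u$-equation by $x_u$ and divide by $\xi_u$ to reach
\begin{equation*}
\lambda^{\,k/2}\,\xi_u=\sum_{v:\,uv\in E}\epsilon_{uv}\,\xi_v,\qquad \epsilon_{uv}\in\{+1,-1\}.
\end{equation*}
This is exactly $A(G_\pi)\xi=\sigma\xi$ for the signed graph with $\pi(uv)=\epsilon_{uv}$ and $\sigma=\lambda^{\,k/2}$, so $\sigma^2=\lambda^k$. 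Partial supports are handled by restricting to the subgraph spanned by $\{u:x_u\neq0\}$ (which has no isolated vertices since $\lambda\neq0$), so the whole question becomes which edges survive there.

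The dichotomy between $k=3$ and $k\geq4$ comes from the added-vertex equations when $x_ux_v\neq0$. For $k\geq4$ there are at least two added vertices per edge, and I would check that if a single $z_j$ vanishes then all of them must, forcing $P_{uv}=0$; hence an edge may be switched off independently of its endpoints, and \emph{every} signed subgraph is realizable. For $k=3$ the single added vertex obeys $z_1^{\,2}=x_ux_v/\lambda$, which is nonzero whenever both endpoints are, so no edge inside the support can be deleted and only \emph{induced} signed subgraphs occur. This case split is the crux and the step I expect to require the most care.

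For the converse I would reverse the computation: given an eigenpair $(\sigma,\xi)$ of a signed (induced, if $k=3$) subgraph $G'_\pi$ with $\sigma^2=\lambda^k$, set $x_u=\xi_u^{\,2/k}$ on $V(G')$ and $0$ elsewhere, and choose the added-vertex values on each edge of $G'$ so that $P_{uv}=\pi(uv)(x_ux_v/\lambda)^{(k-2)/2}$, with all added vertices on edges outside $G'$ set to zero. The main obstacle throughout is the bookkeeping of branches and $k$-th roots of unity: since only the products $P_e$ enter the $u$-equations while the individual $z_j$ are determined merely up to $k$-th roots of unity, I must verify that any prescribed sign pattern $(\epsilon_{uv})$ is genuinely realizable by a consistent choice of the $z_j$, and that a branch of $x_u^{\,k/2}$ can be fixed coherently. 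Finally I would dispose of the degenerate case $\lambda=0$ (forcing $\sigma=0$, always attainable, e.g.\ by a single-vertex subgraph) by a direct separate argument.
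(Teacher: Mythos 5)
Your proposal is correct and follows essentially the same route as the cited proof of this theorem (the same computation is reused in this paper's proof of Theorem \ref{eigenvector}): eliminate the added vertices via $\lambda z_j^{k}=x_ux_vP_e$, substitute $\xi_u$ with $\xi_u^{2}=x_u^{k}$ to reach the signed-graph equation $\lambda^{k/2}\xi_u=\sum_v \epsilon_{uv}\xi_v$ with $\sigma^2=\lambda^k$, and obtain the $k=3$ versus $k\geq 4$ dichotomy from whether all added vertices of an edge can vanish while both endpoints survive. The one verification you flag does close routinely: given a target $P_e$ with $P_e^{2}=(x_ux_v/\lambda)^{k-2}$, set $c=x_ux_vP_e/\lambda$ and take $z_j=c^{1/k}\zeta_j$ with $\zeta_j^{k}=1$ and $\prod_j\zeta_j=P_e\,c^{-(k-2)/k}$, which is legitimate because that ratio is a $k$-th root of unity exactly by the constraint on $P_e^{2}$.
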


Recall that the $d$-th order spectral moment $\mathrm{S}_d(H)$ of a hypergraph $H$ is the sum of $d$-th powers of all eigenvalues of $H$.
For a power hypergraph $G^{(k)}$, Chen et al. gave an expression for  $\mathrm{S}_d(G^{(k})$ that  involves the number of subgraphs and parity-closed walks in $G$ \cite{CHEN2024105909}.

\begin{thm}\cite[Proposition 4.8]{CHEN2024105909}\label{thm5}
Let $G=(V,E)$. For $k \geq 3$,  the $d$-th order spectral moments of the power hypergraphs $G^{(k)}$ is
  \begin{equation*}
    \mathrm{S}_d(G^{(k})=\begin{cases}
                           \sum\limits_{\widehat{G} \in G(\frac{d}{k})} g(\widehat{G},k)p_{\frac{2d}{k}}(\widehat{G})N_{G}(\widehat{G}), & \mbox{$k\mid d$, }  \\
                           0, & \mbox{$k \nmid d$,}
                         \end{cases}
  \end{equation*}
where  $g(\widehat{G},k)=2^{|E(\widehat{G})|-|V(\widehat{G})|}(k-1)^{|V|-|V(\widehat{G})|+(k-2)(|E|-|E(\widehat{G})|)}k^{|V(\widehat{G})|+|E(\widehat{G})|(k-3)}$.
\end{thm}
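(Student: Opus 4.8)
The plan is to evaluate $\mathrm{S}_d(G^{(k)})=\sum_{\lambda}\lambda^d$ (the sum over all eigenvalues of the adjacency tensor $A_{G^{(k)}}$ counted with algebraic multiplicity) by reducing it to the even spectral moments of the signed subgraphs of $G$, and then feeding the result into the parity-closed-walk machinery of Lemma \ref{lem5} together with the decomposition \eqref{eq6}.

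First I would settle the case $k\nmid d$. By Theorem \ref{thm1}, every eigenvalue $\lambda$ of $G^{(k)}$ satisfies $\lambda^k=\sigma^2$ for an eigenvalue $\sigma$ of a signed subgraph of $G$ (a signed induced subgraph when $k=3$); hence the distinct eigenvalues occur in full orbits $\{\omega^j\lambda: 0\le j\le k-1\}$ under multiplication by a primitive $k$-th root of unity $\omega$. I would upgrade this to an invariance with multiplicity, either by exhibiting a diagonal scaling of the auxiliary coordinates by powers of $\omega$ that carries an eigenpair $(\lambda,\mathbf{x})$ to one with eigenvalue $\omega\lambda$, or by checking that the characteristic polynomial is a power of $\lambda$ times a polynomial in $\lambda^k$. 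Either way $\mathrm{S}_d=\omega^d\mathrm{S}_d$, which forces $\mathrm{S}_d(G^{(k)})=0$ whenever $k\nmid d$.

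For $d=km$, the identity $\lambda^d=(\lambda^k)^m=\sigma^{2m}$ turns $\mathrm{S}_{km}(G^{(k)})$ into a weighted sum of the even moments $\mathrm{S}_{2m}(G_\pi)=\sum_\sigma\sigma^{2m}$ of signed subgraphs $G_\pi$, where each nonzero $\sigma$ contributes through its entire root-of-unity orbit (the factor $k$) and where the vertices and auxiliary vertices lying outside the support of the relevant eigenvector each contribute a factor $k-1$ to the multiplicity, as dictated by the $n(k-1)^{n-1}$ count of tensor eigenvalues. Applying Lemma \ref{lem5} converts the average of $\mathrm{S}_{2m}(G_\pi)$ over all signings into $P_{2m}(G)$, and \eqref{eq6} expands $P_{2m}(G)=\sum_{\widehat{G}\in G(m)}p_{2m}(\widehat{G})N_{G}(\widehat{G})$ over connected subgraphs, each carrying a covering parity-closed walk of length $2m=2d/k$. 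Matching a single such walk on a fixed copy of $\widehat{G}$ with its contribution to the hypergraph trace then produces the announced sum, with $g(\widehat{G},k)$ playing the role of the per-copy conversion factor.

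The main obstacle is to pin down $g(\widehat{G},k)$ exactly, that is, to justify all three exponents simultaneously and without double counting. I expect this to require a careful trace computation in which the contribution of a covering walk on $\widehat{G}$ splits into: the signed-graph (switching-class) weight $2^{|E(\widehat{G})|-|V(\widehat{G})|}$ attached to the walk; the factor $(k-1)^{|V|-|V(\widehat{G})|+(k-2)(|E|-|E(\widehat{G})|)}$ counting the multiplicity from the vertices of $G$ and the auxiliary vertices of the edges of $G$ that the walk never enters; and the factor $k^{|V(\widehat{G})|+|E(\widehat{G})|(k-3)}$ collecting the root-of-unity multiplicity together with the ways the walk lifts through the auxiliary vertices of the traversed edges. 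Checking that the length bookkeeping is exactly compatible (each used edge of $G$ being traversed an even number of times, so that $\widehat{G}\in G(m)$ and the walk has length $2m$) and that these three sources of multiplicity are genuinely disjoint is the delicate part; once the weight is established, summing over walks, over copies via $N_{G}(\widehat{G})$, and over isomorphism types $\widehat{G}\in G(m)$ is routine.
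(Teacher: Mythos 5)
First, a point of reference: this paper does not prove Theorem \ref{thm5} at all --- it is quoted verbatim from \cite{CHEN2024105909} (Proposition 4.8), so your attempt can only be compared with the proof in that source. Your treatment of the case $k \nmid d$ is sound: the spectrum of a power hypergraph is $k$-symmetric with multiplicities (the paper cites this before Proposition \ref{lem6}), so $\mathrm{S}_d = \omega^d \mathrm{S}_d$ forces $\mathrm{S}_d(G^{(k)})=0$, and a scaling of the coordinates of the added vertices by roots of unity indeed realizes the rotation. The case $k \mid d$, however, contains a genuine gap at its core. You propose to evaluate $\mathrm{S}_{km}(G^{(k)})=\sum_{\lambda}\mathrm{am}(\lambda)\lambda^{km}$ by attaching multiplicity weights to each eigenvalue $\sigma$ of each signed subgraph: a factor $k$ per root-of-unity orbit, a factor $k-1$ per vertex outside the support, and so on. But Theorem \ref{thm1} is a statement about the \emph{set} of eigenvalues only; it carries no multiplicity information whatsoever, and no formula of the kind you invoke is available --- the relationship between algebraic multiplicity and eigenvector data is precisely the open problem (the Hu--Ye and Fan conjectures) highlighted in the introduction. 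Your closing paragraph concedes that justifying the three exponents of $g(\widehat{G},k)$ ``is the delicate part,'' but that part is essentially the whole theorem: in particular the exponents $(k-2)(|E|-|E(\widehat{G})|)$ and $|E(\widehat{G})|(k-3)$ do not emerge from any per-vertex or per-orbit bookkeeping you describe, and since a single $\lambda$ can arise from several pairs $(\widehat{G},\pi)$ and several $\sigma$, per-triple weight assignments also face an unresolved double-counting problem.

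Moreover, the logical order in \cite{CHEN2024105909} is the reverse of yours, which makes your plan circular within this framework. There, the moment formula is proved \emph{first}, by a direct trace computation: the $d$-th trace of the adjacency tensor is expanded via the Morozov--Shakirov/Shao--Qi--Shan trace formula, and the contributing sub-multigraph structures of $G^{(k)}$ are classified and shown to collapse onto covering parity-closed walks on subgraphs $\widehat{G}$ of $G$, which is exactly where the constant $g(\widehat{G},k)$ is computed --- no eigenvalue multiplicities are assumed. The multiplicity of the spectral radius (Theorem \ref{thm2}) is then \emph{extracted from} the moments by the limiting argument that the present paper replays in Part~1 of Theorem \ref{multiplicity}; deriving the moments from multiplicities, as you propose, presupposes what the moment formula is used to establish. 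Finally, Lemma \ref{lem5} cannot play the role you assign it: it converts the average of signed moments into $P_{2m}(G)$ with the uniform weight $2^{-|E|}$, whereas the target formula carries the non-uniform per-subgraph weight $g(\widehat{G},k)$; in this circle of ideas Lemma \ref{lem5} and Lemma \ref{lem4} serve downstream, to pass from moments to multiplicities, not to produce the moment formula itself.
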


The spectrum of a hypergraph is  \emph{$k$-symmetric} if it is invariant under a rotation of an angle $2\pi/k$ in the complex plane.
Indeed, the spectrum of a $k$-power hypergraph is $k$-symmetric \cite{fan2019spectral}.
By the symmetry of the spectrum of a power hypergraph and Theorem \ref{thm1}, we get the following property of the algebraic  multiplicities of  eigenvalues of the power hypergraphs.
\begin{pro}\label{lem6}
Let $k \geq 3$. Let $\lambda$ be an eigenvalue of $G^{(k)}$. Then $|\lambda|$ is an eigenvalue of $G^{(k)}$, and
the total algebraic  multiplicity of eigenvalues with the modulus $|\lambda|$ is $k$ times the algebraic  multiplicity of $\lambda$.
\end{pro}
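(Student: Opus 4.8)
The plan is to combine two ingredients: the eigenvalue characterization in Theorem \ref{thm1}, which forces $\lambda^k$ to be a non-negative real number for every eigenvalue $\lambda$, together with the $k$-symmetry of the spectrum of $G^{(k)}$ from \cite{fan2019spectral}, which guarantees that multiplication by $\omega = e^{2\pi i/k}$ permutes the spectrum while preserving algebraic multiplicities. These two facts pull in opposite directions in a useful way: the first confines the eigenvalues to a thin set (those whose $k$-th power is a non-negative real), and the second forces their multiplicities to be equal along each rotational orbit.

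First I would settle that $|\lambda|$ is an eigenvalue. By Theorem \ref{thm1} there is a signed (induced, when $k=3$) subgraph of $G$ with a real eigenvalue $\sigma$ satisfying $\sigma^2 = \lambda^k$. Since $\sigma$ is real, $\lambda^k = \sigma^2 \ge 0$, so $\lambda^k$ is a non-negative real; hence $|\lambda|^k = |\lambda^k| = \lambda^k = \sigma^2$. Thus $|\lambda|$ meets the very same condition of Theorem \ref{thm1} with the identical witness $\sigma$, so $|\lambda|$ is an eigenvalue of $G^{(k)}$.

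For the multiplicity statement I would first identify all eigenvalues sharing the modulus $r := |\lambda|$ (assuming $\lambda \neq 0$, the case $\lambda = 0$ being degenerate). If $\mu$ is any eigenvalue with $|\mu| = r$, then by the same argument $\mu^k$ is a non-negative real with $|\mu^k| = r^k$, which forces $\mu^k = r^k$. Hence $\mu$ is a $k$-th root of $r^k$, so $\mu \in \{\, r\omega^j : 0 \le j \le k-1 \,\}$, precisely the $\omega$-orbit of $r$; and since $\lambda^k = r^k$ as well, $\lambda$ itself lies in this orbit, whose $k$ points are pairwise distinct because $\lambda \neq 0$. The $k$-symmetry of the spectrum then yields $\mathrm{am}_{\omega\mu}(G^{(k)}) = \mathrm{am}_{\mu}(G^{(k)})$ for every $\mu$, so all $k$ eigenvalues of the orbit share the common algebraic multiplicity $\mathrm{am}_{\lambda}(G^{(k)})$. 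Summing over the orbit, the total algebraic multiplicity of eigenvalues of modulus $r$ equals $k\cdot\mathrm{am}_{\lambda}(G^{(k)})$.

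The substance of the argument, and the step I expect to be the crux, is the reduction showing that the eigenvalues of a fixed modulus $r$ form a \emph{single} $\omega$-orbit rather than several. This is exactly where Theorem \ref{thm1} is indispensable: without the constraint that each $\mu^k$ is a non-negative real (so that $|\mu| = r$ pins down $\mu^k = r^k$ uniquely), eigenvalues of modulus $r$ could in principle have arguments not related by a $k$-th root of unity, and the clean factor of $k$ would fail. Once the single-orbit structure is established, the equality of multiplicities is an immediate consequence of the $k$-symmetry.
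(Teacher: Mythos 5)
Your proof is correct and follows essentially the same route as the paper, which derives the proposition from exactly the two ingredients you invoke: Theorem \ref{thm1} (forcing $\lambda^k=\sigma^2\geq 0$ with $\sigma$ real, so all eigenvalues of modulus $r$ satisfy $\mu^k=r^k$ and form the single orbit $\{re^{2\pi i j/k}: 0\leq j\leq k-1\}$) together with the $k$-symmetry of the spectrum of $G^{(k)}$, your write-up simply making explicit the single-orbit reduction that the paper leaves implicit. Your exclusion of $\lambda=0$ is sensible, as the multiplicity claim only holds for nonzero eigenvalues (the rotational orbit degenerates to a point), a caveat the paper also passes over silently.
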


From the Perron-Frobenius Theorem for tensors \cite{Chang2008}, it is known that the spectral radius $\rho(H)$ is an eigenvalue of $H$. 
Chen et al. used the spectral moment of $G^{(k)}$ to give the algebraic multiplicity of the spectral radius  \cite{CHEN2024105909}. 

\begin{thm}\cite[Theorem 7.2]{CHEN2024105909}\label{thm2}
Let $k\geq3$. For a connected graph $G=(V,E)$, the algebraic multiplicity of the spectral radius of $G^{(k)}$ is $k^{|E|(k-3)+|V|-1}$.
\end{thm}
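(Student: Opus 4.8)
The plan is to recover the algebraic multiplicity of $\rho=\rho(G^{(k)})$ as a limit of normalized spectral moments, exploiting the $k$-symmetry of the spectrum. Writing $m_\mu$ for the algebraic multiplicity of an eigenvalue $\mu$, the $d$-th spectral moment is $\mathrm{S}_d(G^{(k)})=\sum_\mu m_\mu \mu^d$. First I would restrict to $d=k\ell$ and examine $\mathrm{S}_{k\ell}(G^{(k)})/\rho^{k\ell}=\sum_\mu m_\mu(\mu/\rho)^{k\ell}$. The contributions of eigenvalues with $|\mu|<\rho$ tend to $0$ as $\ell\to\infty$. By Theorem \ref{thm1} together with the $k$-symmetry of the spectrum, every eigenvalue of maximal modulus has the form $\rho\,\omega^j$ with $\omega=e^{2\pi i/k}$, whence $(\mu/\rho)^{k\ell}=\omega^{jk\ell}=1$. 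Therefore $\lim_{\ell\to\infty}\mathrm{S}_{k\ell}(G^{(k)})/\rho^{k\ell}$ equals the total algebraic multiplicity of all eigenvalues of modulus $\rho$.

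Next I would evaluate the same limit from the combinatorial side. By Lemma \ref{lemma1} and Theorem \ref{thm1} the largest modulus is attained at $\sigma=\rho(G)$, so $\rho=\rho(G)^{2/k}$ and hence $\rho^{k\ell}=\rho(G)^{2\ell}$. Substituting the formula of Theorem \ref{thm5} gives
\begin{align*}
\frac{\mathrm{S}_{k\ell}(G^{(k)})}{\rho^{k\ell}}=\sum_{\widehat{G}\in G(\ell)} g(\widehat{G},k)\,\frac{p_{2\ell}(\widehat{G})}{\rho(G)^{2\ell}}\,N_{G}(\widehat{G}).
\end{align*}
Since $G$ is finite, for $\ell\ge |E|$ the index family $G(\ell)$ is the fixed finite set of all connected subgraph types of $G$, so the limit may be taken termwise. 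For a proper connected subgraph $\widehat{G}$ one has $\rho(\widehat{G})<\rho(G)$, and factoring $p_{2\ell}(\widehat{G})/\rho(G)^{2\ell}=\big[p_{2\ell}(\widehat{G})/\rho(\widehat{G})^{2\ell}\big]\big(\rho(\widehat{G})/\rho(G)\big)^{2\ell}$ shows this ratio tends to $0$ by Lemma \ref{lem4}. Thus only the term $\widehat{G}=G$ survives, contributing $g(G,k)\cdot 2^{|V|-|E|}\cdot N_{G}(G)$ in the limit, again by Lemma \ref{lem4}.

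Finally, $N_{G}(G)=1$ and $g(G,k)=2^{|E|-|V|}k^{|V|+|E|(k-3)}$, so the limit evaluates to $k^{|V|+|E|(k-3)}$. Equating the two computations of the limit shows that the total multiplicity of eigenvalues of modulus $\rho$ equals $k^{|V|+|E|(k-3)}$, and dividing by $k$ via Proposition \ref{lem6} yields $\mathrm{am}_\rho(G^{(k)})=k^{|E|(k-3)+|V|-1}$. I expect the genuine obstacle to be the isolation of the dominant term $\widehat{G}=G$ and the justification of the termwise limit: this is exactly where Lemma \ref{lem4} and the strict Perron–Frobenius inequality $\rho(\widehat{G})<\rho(G)$ for proper connected subgraphs are doing the work, whereas the remaining steps amount to bookkeeping the exponents hidden in $g(G,k)$.
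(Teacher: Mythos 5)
Your proposal is correct and takes essentially the same approach as the source: Theorem \ref{thm2} is imported from the cited work, where it is obtained by exactly this spectral-moment computation (substituting Theorem \ref{thm5}, isolating the term $\widehat{G}=G$ via Lemma \ref{lem4} and the strict Perron--Frobenius inequality for proper connected subgraphs, then dividing by $k$ through Proposition \ref{lem6}), and the present paper runs the identical scheme for $\Lambda$ in Part 1 of Theorem \ref{multiplicity}. Your exponent bookkeeping checks out: $g(G,k)\cdot 2^{|V|-|E|}=k^{|V|+|E|(k-3)}$, giving $\mathrm{am}_{\rho}(G^{(k)})=k^{|E|(k-3)+|V|-1}$ after division by $k$.
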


For a connected uniform hypergraph $H$, the projective variety $\mathbb{V}_{\rho}(H)$ is characterized by the Smith normal form of the incidence matrix of $H$ over $\mathbb{Z}_k$ \cite{fan2024,Fan2019,fan2019dimension}.
Here are some related lemmas used in this paper.
\begin{lem}\cite{fan2024,Fan2019,fan2019dimension}\label{xinlemma}
Let $H$ be a connected uniform hypergraph.
 \begin{enumerate}
\renewcommand{\labelenumi}{(\alph{enumi})}
  \item \cite[Theorem 3.8]{fan2019dimension} The dimension of the projective eigenvariety $\mathbb{V}_{\rho} (H)$ is zero, i.e., there are finitely many eigenvectors of $H$ corresponding to $\rho(H)$ up to a scalar.
  \item \cite[Corollary 3.4]{fan2024} The multiplicity of each point in $\mathbb{V}_{\rho} (H)$ is 1.  
  \item \cite[Corollary 4.2]{Fan2019} Let $G=(V,E)$ be a connected graph. Then $\mathrm{am}_{\rho}(G^{(k)})=|\mathbb{V}_{\rho} (G^{(k)})|=k^{|E|(k-3)+|V|-1}$ for $k \geq 3$. 
\end{enumerate}
\end{lem}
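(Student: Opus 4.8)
Since the three assertions are quoted from the cited references, the goal is to reconstruct the underlying arguments; all three rest on Perron-Frobenius theory for nonnegative tensors combined with a linear computation over $\mathbb{Z}_k$. First I would prove (a). As $H$ is connected, its adjacency tensor $A_H$ is nonnegative and weakly irreducible, so the tensor Perron-Frobenius theorem gives that $\rho(H)$ is an eigenvalue carrying a positive eigenvector $\mathbf{u}$, unique up to scaling. For an arbitrary eigenvector $\mathbf{x}$ of $\rho(H)$, applying the triangle inequality coordinatewise to $A_H\mathbf{x}^{k-1}=\rho(H)\mathbf{x}^{[k-1]}$ gives $A_H|\mathbf{x}|^{k-1}\ge\rho(H)|\mathbf{x}|^{[k-1]}$, and a Collatz-Wielandt argument forces equality together with $|\mathbf{x}|=c\,\mathbf{u}$; thus the modulus of every coordinate is determined. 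The only remaining freedom is in the phases $x_i/|x_i|$, and equality in each triangle inequality requires, for every edge $e$ and each $i\in e$, that the monomial $x^{e\setminus\{i\}}$ align with $x_i^{k-1}$. Raising these relations to the $k$-th power turns them into a finite system of linear congruences over $\mathbb{Z}_k$, whose projective solution set is finite; hence $\dim\mathbb{V}_\rho(H)=0$.

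For (c) the value $k^{|E|(k-3)+|V|-1}$ would be obtained in two independent ways. On one side, Theorem \ref{thm2} evaluates the algebraic multiplicity $\mathrm{am}_\rho(G^{(k)})$ directly from the spectral moments. On the other, encoding the phase congruences from (a) in the incidence matrix of $G^{(k)}$ reduced modulo $k$ and computing its Smith normal form over $\mathbb{Z}_k$ counts the projective points as $|\mathbb{V}_\rho(G^{(k)})|=k^{|E|(k-3)+|V|-1}$. Since the two computations return the same integer, $\mathrm{am}_\rho(G^{(k)})=|\mathbb{V}_\rho(G^{(k)})|$.

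Part (b) is the decisive and most delicate step, refining the count from points to multiplicities: it asserts that every local multiplicity is $1$, equivalently that $\mathbb{V}_\rho(H)$ is reduced, so that $\#\mathbb{V}_\rho(H)=|\mathbb{V}_\rho(H)|$. I would attack it by computing the Jacobian of the defining system $\big(\rho x_i^{k-1}-\sum_{e\in E_i(H)}x^{e\setminus\{i\}}\big)_{i\in V_H}$ at a Perron-type eigenvector and showing that, once the one-dimensional scaling direction (the Euler relation) is quotiented out, it is nonsingular; positivity of $\mathbf{u}$ makes this linearization behave like $\rho D-A'$ for an irreducible nonnegative matrix $A'$, whose kernel is exactly one-dimensional. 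This reducedness is a genuinely nonlinear transversality statement for which the matrix principle equating algebraic and geometric multiplicity offers no automatic analogue, and it is precisely what, combined with the coincidence of the two counts in (c), yields $\mathrm{am}_\rho(G^{(k)})=\#\mathbb{V}_\rho(G^{(k)})=|\mathbb{V}_\rho(G^{(k)})|$ and forms the technical heart of the cited work.
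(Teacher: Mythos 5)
The paper never proves this lemma---it is imported wholesale from the three cited references---so your reconstruction can only be measured against the arguments of those sources and against the techniques the paper itself deploys elsewhere. On that score, parts (a) and (c) of your proposal are faithful to the literature: $\dim \mathbb{V}_{\rho}(H)=0$ does come from the Perron--Frobenius theory for weakly irreducible nonnegative tensors (every eigenvector of $\rho$ has $|\mathbf{x}|=c\,\mathbf{u}$ by the Yang--Yang/Collatz--Wielandt argument, and the residual phase freedom, governed by the alignment conditions $\arg x^{e\setminus\{i\}}=\arg x_i^{k-1}$, reduces to a linear system over $\mathbb{Z}_k$ handled by the Smith normal form of the incidence matrix), and the equality $\mathrm{am}_{\rho}(G^{(k)})=|\mathbb{V}_{\rho}(G^{(k)})|$ is indeed obtained by juxtaposing two independent computations---Chen et al.'s spectral-moment evaluation (Theorem \ref{thm2} here) and Fan et al.'s count $k^{|E|(k-3)+|V|-1}$---exactly as the paper's introduction describes. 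One small point in (a): to get finiteness you should note that the alignment conditions force $k\theta_i \equiv \sum_{j\in e}\theta_j \pmod{2\pi}$ for every $i\in e$, so after the projective normalization $\theta_{i_0}=0$, connectivity leaves only finitely many root-of-unity choices; this is precisely what the $\mathbb{Z}_k$-linear system encodes.

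The genuine soft spot is (b). Your Jacobian mechanism is the right one---it is the same gradient-rank criterion from multiplicity theory that this paper itself invokes in the proof of Lemma \ref{variety2}---but as written you verify nonsingularity modulo the scaling direction only at the \emph{positive} Perron vector, where the linearization ``behaves like $\rho D - A'$ for an irreducible nonnegative matrix.'' The claim concerns \emph{every} point of $\mathbb{V}_{\rho}(H)$, and all points other than $[\mathbf{u}]$ have genuinely complex coordinates, so $A'$ is not nonnegative there and Perron--Frobenius does not apply directly. The repair is short and should be stated: write $x_j=\varepsilon_j u_j$ with $|\varepsilon_j|=1$; the equality conditions from (a) give $\varepsilon^{e\setminus\{i\}}=\varepsilon_i^{k-1}$ for every $e\ni i$, hence the off-diagonal blocks satisfy $B(\mathbf{x})_{ij}=\sum_{e\ni i,j}x^{e\setminus\{i,j\}}=\varepsilon_i^{k-1}\,\varepsilon_j^{-1}B(\mathbf{u})_{ij}$, and the diagonal term $(k-1)\rho x_i^{k-2}=\varepsilon_i^{k-1}\left((k-1)\rho u_i^{k-2}\right)\varepsilon_i^{-1}$, so that $J(\mathbf{x})=D_1 J(\mathbf{u}) D_2^{-1}$ for diagonal unimodular $D_1=\mathrm{diag}(\varepsilon_i^{k-1})$, $D_2=\mathrm{diag}(\varepsilon_j)$. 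Thus the Jacobian at an arbitrary point of $\mathbb{V}_{\rho}(H)$ is diagonally equivalent to the one at the Perron vector, has the same rank $n-1$, and its kernel is exactly the Euler (scaling) direction spanned by $\mathbf{x}$; multiplicity one at every point then follows as you argue for $[\mathbf{u}]$. With that supplement your sketch of (b) is sound and consonant with the cited work.
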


\section{The characterization of the eigenpair $(\Lambda, \mathbf{x})$}

Let $G=(V,E)$. Our first goal is to characterize the second-largest modulus $\Lambda$ among the eigenvalues of the power hypergraph $G^{(k)}$.
We write $\rho_{V}(G)=\max_{v \in V} \rho(G-v)$ and $\rho_{E}(G)=\max_{e \in E} \rho(G-e)$.
It is clear that the weakest edge $e_0$ is the one such that $\rho(G-e_0)=\rho_{E}(G)$.
If $G$ is neither a tree nor an odd-unicyclic graph, from Proposition \ref{pro2}, $\Gamma(G)$ is not empty.
Thus, we  define
$$\rho_{\Gamma}(G)=\max_{G_{\pi} \in \Gamma(G) }\rho(G_{\pi} ).$$
By Lemmas  \ref{lemma1} and  \ref{lemma2}, we give the following result to compare $\rho_{V}(G)$, $\rho_{E}(G)$ and $\rho_{\Gamma}(G)$,
which plays a key role in determining $\Lambda$.

\begin{pro}\label{pro3}
 Let $G$ be a connected graph with $n$ vertices.
  \begin{enumerate}
\renewcommand{\labelenumi}{(\alph{enumi})}
  \item Then $\lambda_2(G)<\rho_{V}(G)\leq \rho_{E}(G)$.
  \item If $G$ is non-bipartite,  then $-\lambda_n(G)<\rho_{E}(G) $.
  \item If $\Gamma(G)$ is not empty, then  $\rho_{\Gamma}(G) <\rho_{E}(G)  $.
\end{enumerate}
\end{pro}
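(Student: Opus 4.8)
The plan is to prove the three inequalities separately, reducing each strict bound to the spectral radius of a proper spanning subgraph of $G$, which is then dominated by $\rho_{E}(G)$ via monotonicity of the spectral radius under deletion of edges and vertices. Throughout I use that $G$ is connected (and $n\ge 2$), so $A(G)$ is an irreducible nonnegative symmetric matrix: its Perron root $\lambda_{1}(G)=\rho(G)$ is simple with a positive eigenvector $\mathbf{x}>0$, and $\lambda_{1}(G)>\lambda_{2}(G)$. I also record the elementary monotonicity fact that if $H$ is a subgraph of $G'$ (on the same or fewer vertices) then $\rho(H)\le\rho(G')$, since $A(H)$ is either a principal submatrix of $A(G')$ or is dominated by it entrywise.

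For part (a), the easy half $\rho_{V}(G)\le\rho_{E}(G)$ follows by choosing, for the vertex $v$ attaining $\rho_{V}(G)$, any edge $e$ incident to $v$ (which exists as $G$ is connected); then $G-v$ is an induced subgraph of $G-e$, so $\rho(G-v)\le\rho(G-e)\le\rho_{E}(G)$. The crux is the \emph{strict} inequality $\lambda_{2}(G)<\rho_{V}(G)$. Cauchy interlacing only gives $\lambda_{1}(G-v)\ge\lambda_{2}(G)$ for every $v$, so strictness must be extracted by hand. First I would fix a $\lambda_{2}(G)$-eigenvector $\mathbf{z}$ and choose a vertex $v$ with $z_{v}\neq 0$ (possible since $\mathbf{z}\neq\mathbf{0}$). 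Then I would form the vector $\mathbf{w}=z_{v}\mathbf{x}-x_{v}\mathbf{z}$, which is nonzero (as $\mathbf{x},\mathbf{z}$ are independent and $z_{v}\neq0$) and satisfies $w_{v}=0$. Using $\mathbf{x}\perp\mathbf{z}$, its Rayleigh quotient is the weighted average
\[
\frac{\mathbf{w}^{\top}A(G)\mathbf{w}}{\mathbf{w}^{\top}\mathbf{w}}
=\frac{z_{v}^{2}\lambda_{1}(G)\|\mathbf{x}\|^{2}+x_{v}^{2}\lambda_{2}(G)\|\mathbf{z}\|^{2}}
{z_{v}^{2}\|\mathbf{x}\|^{2}+x_{v}^{2}\|\mathbf{z}\|^{2}}>\lambda_{2}(G),
\]
the strict inequality coming from the positive weight $z_{v}^{2}\|\mathbf{x}\|^{2}>0$ on $\lambda_{1}(G)>\lambda_{2}(G)$. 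Since $w_{v}=0$, this quantity is bounded above by $\lambda_{1}(G-v)=\rho(G-v)\le\rho_{V}(G)$, which yields $\lambda_{2}(G)<\rho_{V}(G)$.

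For parts (b) and (c) I would funnel everything through Proposition \ref{pro7}, which converts unbalancedness into a strict gain of spectral radius on a proper spanning subgraph. For (b), note $-\lambda_{n}(G)=\lambda_{1}(G_{-})$ since $A(G_{-})=-A(G)$; as $G$ is non-bipartite, any odd cycle gives a negative cycle in $G_{-}$, so $G_{-}$ is not balanced, and Proposition \ref{pro7} supplies a proper spanning subgraph $H$ with $\lambda_{1}(G_{-})<\rho(H)$. Because $H$ is proper, $H\subseteq G-e$ for some edge $e$, whence $-\lambda_{n}(G)<\rho(H)\le\rho(G-e)\le\rho_{E}(G)$. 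For (c), I would first observe that every $G_{\pi}\in\Gamma(G)$ is unbalanced: balancedness would mean switching-equivalence to $G_{+}$, forcing $\rho(G_{\pi})=\rho(G)$ by Lemma \ref{lemma1} and contradicting $G_{\pi}\in\Gamma(G)$. Writing $\rho(G_{\pi})=\max\{\lambda_{1}(G_{\pi}),-\lambda_{n}(G_{\pi})\}=\max\{\lambda_{1}(G_{\pi}),\lambda_{1}(G_{-\pi})\}$ and noting that $G_{-\pi}$ also lies in $\Gamma(G)$ (as $A(G_{-\pi})=-A(G_{\pi})$ gives $\rho(G_{-\pi})=\rho(G_{\pi})<\rho(G)$) and is therefore unbalanced as well, I would apply Proposition \ref{pro7} to whichever of $G_{\pi},G_{-\pi}$ realizes the maximum. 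This yields $\rho(G_{\pi})<\rho(G-e)\le\rho_{E}(G)$ for each $G_{\pi}\in\Gamma(G)$; taking the maximum over the finite set $\Gamma(G)$ gives $\rho_{\Gamma}(G)<\rho_{E}(G)$.

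The main obstacle is securing the strictness of each bound. For (a) plain interlacing is not enough, and the two-dimensional span $\langle\mathbf{x},\mathbf{z}\rangle$ trick (with the deliberate choice $z_{v}\neq0$) is what forces the gap; the delicate point is that the chosen $\mathbf{w}$ is genuinely supported off $v$ yet retains a positive component along the Perron direction. For (b) and (c) the strictness is exactly the content of Proposition \ref{pro7}, so the remaining work is bookkeeping: verifying that the relevant signed graphs are unbalanced and that the subgraph $H$ produced is proper and spanning, so that it embeds in some $G-e$ and is controlled by $\rho_{E}(G)$.
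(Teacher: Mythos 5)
Your proposal is correct, and for parts (b) and (c) it follows the paper's route exactly: the paper likewise writes $-\lambda_n(G)=\lambda_1(G_-)$ and $\rho(G_\pi)=\max\{\lambda_1(G_\pi),\lambda_1(G_{-\pi})\}$, observes via Lemma \ref{lemma1} that the relevant signed graphs are unbalanced, and invokes Proposition \ref{pro7} to obtain a proper spanning subgraph $H$ with $\lambda_1 < \rho(H)\le \rho_E(G)$; your extra bookkeeping (that $G_{-\pi}\in\Gamma(G)$ and is unbalanced, and that a proper spanning subgraph embeds in some $G-e$) just makes explicit what the paper leaves implicit. The one genuine difference is part (a): the paper dismisses it with ``follows from the Interlacing Theorem,'' but interlacing alone gives only $\lambda_1(G-v)\ge\lambda_2(G)$, i.e., the non-strict bound, whereas the claimed inequality $\lambda_2(G)<\rho_V(G)$ is strict. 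Your two-dimensional Rayleigh-quotient argument — taking $\mathbf{w}=z_v\mathbf{x}-x_v\mathbf{z}$ with $z_v\neq 0$, so that $w_v=0$ while $\mathbf{w}$ keeps a positive component along the simple Perron direction ($\lambda_1(G)>\lambda_2(G)$ by connectedness) — is exactly the missing ingredient that upgrades interlacing to strictness, and it is correct as written (the cross terms vanish by $\mathbf{x}\perp\mathbf{z}$, and $w_v=0$ lets the quadratic form pass to the principal submatrix $A(G-v)$). So your write-up is not merely equivalent to the paper's proof but repairs its one glossed-over step.
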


\begin{proof}
It is clear that (a) follows from the Interlacing Theorem for graphs.
Note that $\lambda_1(G_{-})=-\lambda_n(G)$ and $\rho(G_{\pi})=\max\{\lambda_1(G_{\pi}),\lambda_1(G_{-\pi})\}$.
If $G$ is non-bipartite, then $G_{-}$ is not balanced.
From  Proposition \ref{pro7}, there exists a spanning  proper subgraphs $H_1$ such that
$$ -\lambda_n(G)=\lambda_1(G_{-})< \rho(H_1) \leq \rho_{E}(G).$$
If $\Gamma(G)$ is not empty, for any $G_{\pi} \in \Gamma(G)$, we see that $G_{\pi}$ and $G_{-\pi}$ are not balanced by Lemma \ref{lemma1}.
From  Proposition \ref{pro7}, there exists a spanning  proper subgraphs $H_2$ such that
$$\rho(G_{\pi})=\max\{\lambda_1(G_{\pi}),\lambda_1(G_{-\pi})\} < \rho(H_2) \leq \rho_{E}(G) .$$
Then, we get (b) and (c).
\end{proof}

For the power hypergraph $G^{(k)}$ with $k \geq 4$,  we will show that second-largest modulus $\Lambda(G^{(k)})=\sqrt[k]{\rho_{E}(G)^2}$, which in turn implies that $\rho(G)-\sqrt[2]{\Lambda(G^{(k)})^k}$ serves as a tight lower bound for the dynamical importance of an edge in $G$.

\begin{thm}\label{eigenvalue}
Let $G$ be a connected graph with $n$ vertices.
Let $\Lambda=\Lambda(G^{(k)})$ be the second-largest modulus among eigenvalues of $G^{(k)}$.
Then  $\Lambda=\sqrt[k]{\max_{e \in E} \rho(G-e)^2}$ for $k \geq 4$, and $\Lambda$ is an eigenvalue of $G^{(k)}$.
\end{thm}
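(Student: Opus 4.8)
The plan is to determine $\Lambda$ in two complementary steps: first exhibit $\sqrt[k]{\rho_E(G)^2}$ as an eigenvalue of $G^{(k)}$ strictly smaller than $\rho(G^{(k)})$, and then show that no eigenvalue lies strictly between these two values. For the first step I would invoke Theorem \ref{thm1}(b): since $e_0$ is the weakest edge, the signed subgraph $(G-e_0)_{+}$ is a genuine signed subgraph of $G$ with eigenvalue $\sigma=\rho(G-e_0)=\rho_E(G)$. Setting $\lambda=\sqrt[k]{\sigma^2}$ gives $\sigma^2=\lambda^k$, so $\lambda$ is an eigenvalue of $G^{(k)}$. To see it is the \emph{second}-largest modulus and not the largest, recall that $\rho(G^{(k)})=\sqrt[k]{\rho(G)^2}$ (the $k$-th root arising from the $k$-symmetry of the spectrum and Theorem \ref{thm1}), and since $G-e_0$ is a proper subgraph of the connected graph $G$ we have $\rho(G-e_0)<\rho(G)$, whence $\Lambda\le\sqrt[k]{\rho_E(G)^2}<\rho(G^{(k)})$.

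The main work is the reverse inequality: proving that every eigenvalue $\mu$ of $G^{(k)}$ with $|\mu|<\rho(G^{(k)})$ satisfies $|\mu|\le\sqrt[k]{\rho_E(G)^2}$. By Theorem \ref{thm1}(b), any such $\mu$ arises from some signed subgraph $G_{\pi}'$ of $G$ via $\sigma^2=\mu^k$ for an eigenvalue $\sigma$ of $G_{\pi}'$, and hence $|\mu|=\sqrt[k]{\sigma^2}=\sqrt[k]{|\sigma|^2}$. So it suffices to show that every eigenvalue $\sigma$ of every signed subgraph $G_{\pi}'\ne G_{+},G_{-}$ (up to switching) obeys $|\sigma|\le\rho_E(G)$, since the excluded full-support balanced cases are exactly those producing $\rho(G^{(k)})$. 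This is precisely where Proposition \ref{pro3} does the heavy lifting. A signed subgraph of $G$ either omits at least one edge of $G$, in which case its spectral radius is bounded by the spectral radius of a proper subgraph and thus by $\rho_E(G)$; or it uses all edges but with a nontrivial sign pattern, i.e.\ $G_{\pi}\in\Gamma(G)$, in which case part (c) of Proposition \ref{pro3} gives $\rho(G_{\pi})<\rho_E(G)$. The delicate point is to organize the case analysis cleanly: a signed subgraph on an edge subset $F\subsetneq E$ has spectral radius at most $\rho(G[F]_{\pi})\le\rho(G-e)$ for the omitted edge $e$, using Lemma \ref{lemma1} to pass from the signed version to the underlying unsigned subgraph and monotonicity of spectral radius under edge addition.

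The step I expect to be the main obstacle is handling the signed subgraphs that retain all edges but are unbalanced, and more generally confirming that the supremum over \emph{all} signed subgraphs is attained exactly at $\rho_E(G)$ rather than exceeded by some unbalanced configuration. For an unbalanced signed graph the eigenvalues can be large in modulus through $-\lambda_n$, so I must check both $\lambda_1(G_{\pi})$ and $-\lambda_n(G_{\pi})=\lambda_1(G_{-\pi})$; Proposition \ref{pro3}(c) is stated for $\rho(G_{\pi})=\max\{\lambda_1(G_{\pi}),\lambda_1(G_{-\pi})\}$, so it covers this, but I would need to verify that every relevant $G_{\pi}$ and its negation fall into $\Gamma(G)$ or reduce to a proper-subgraph bound. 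Assembling these bounds to conclude $|\sigma|\le\rho_E(G)$ uniformly, and hence $\Lambda=\sqrt[k]{\rho_E(G)^2}$ exactly, completes the argument.
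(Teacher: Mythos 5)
Your overall route coincides with the paper's: feed Theorem \ref{thm1}(b) into the comparisons of Proposition \ref{pro3}, after noting $\rho(F_\pi)\le\rho(F)\le\rho_E(G)$ for proper subgraphs $F$ via Lemma \ref{lemma1}. But your reduction has a genuine gap. You claim it suffices to bound the eigenvalues of all signed subgraphs other than $G_{+}$ and $G_{-}$, ``since the excluded full-support balanced cases are exactly those producing $\rho(G^{(k)})$.'' That is false: $G_{+}$ and $G_{-}$ feed \emph{all} of their eigenvalues into Theorem \ref{thm1}, not just $\pm\rho(G)$. Concretely, $\sigma=\lambda_2(G)$ (an eigenvalue of $G_{+}$) yields an eigenvalue of $G^{(k)}$ of modulus $\sqrt[k]{\lambda_2(G)^2}$, and when $G$ is non-bipartite the antibalanced graph $G_{-}$ --- which has full edge support and is \emph{not} in $\Gamma(G)$, since $\rho(G_{-})=\rho(G)$ --- yields $\sqrt[k]{\lambda_n(G)^2}$ via $\sigma=\lambda_1(G_{-})=-\lambda_n(G)$. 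Neither branch of your dichotomy (proper edge support, or full support with $G_{\pi}\in\Gamma(G)$) covers these two candidates, so your argument never establishes $\lambda_2(G)\le\rho_E(G)$ or $-\lambda_n(G)\le\rho_E(G)$. This is precisely the obstacle you half-anticipated in your final paragraph, and it does materialize: for non-bipartite $G$, the unbalanced full-support graph $G_{-}$ neither lies in $\Gamma(G)$ nor reduces to a proper-subgraph bound, so Proposition \ref{pro3}(c) alone cannot dispose of it.

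The fix is exactly what the paper does: $\Lambda^k$ is the maximum of (at most) four quantities, $\rho_E(G)^2$, $\lambda_2(G)^2$, $\lambda_n(G)^2$ (non-bipartite case only), and $\rho_\Gamma(G)^2$ (only when $\Gamma(G)\neq\emptyset$, cf.\ Proposition \ref{pro2}); parts (a) and (b) of Proposition \ref{pro3} --- which you never invoke --- supply $\lambda_2(G)<\rho_V(G)\le\rho_E(G)$ and $-\lambda_n(G)<\rho_E(G)$, while part (c) handles $\rho_\Gamma(G)$ as you intended. Two smaller points: after exhibiting $\sqrt[k]{\rho_E(G)^2}$ as an eigenvalue modulus strictly below $\rho(G^{(k)})$, the inequality you may conclude is $\Lambda\ge\sqrt[k]{\rho_E(G)^2}$, not $\Lambda\le\sqrt[k]{\rho_E(G)^2}$ as written (the reverse inequality is the content of your second step); and your explicit construction via $(G-e_0)_{+}$ does legitimately show that the positive real $\Lambda$ is itself an eigenvalue, which the paper instead gets from the $k$-symmetry of the spectrum (Proposition \ref{lem6}).
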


\begin{proof}
Let  $F$ be a proper subgraph of $G$. We have that  $\rho(F_{\pi}) \leq \rho(F)\leq \rho_{E}(G)$ for any $\pi$.
Thus, we observe from Theorem \ref{thm1} that $ \Lambda $ is determined by exactly four potential quantities: $ \rho_{E}(G) $, $ \lambda_2(G) $, $ -\lambda_n(G) $, and $ \rho_{\Gamma}(G) $.
Then we have that
\begin{align}\label{eq4}
 \Lambda^k =\begin{cases}
                                 \max\{\rho_{E}(G)^2, \lambda_2(G)^2\}, & \mbox{if $G$ is a tree,} \\
                                 \max\{\rho_{E}(G)^2, \lambda_2(G)^2,\lambda_n(G)^2\}, & \mbox{if $G$ is an odd-unicyclic graph,}  \\
                                 \max\{\rho_{E}(G)^2, \rho_{\Gamma}(G)^2, \lambda_2(G)^2\}, & \mbox{if $G$ is bipartite but not a tree,}  \\
                                 \max\{\rho_{E}(G)^2, \rho_{\Gamma}(G)^2, \lambda_2(G)^2,\lambda_n(G)^2\}, & \mbox{otherwise}.
                               \end{cases}
\end{align}
By  Proposition \ref{pro3}, we get $\Lambda^k=\rho_{E}(G)^2=\max_{e \in E} \rho(G-e)^2$. Proposition \ref{lem6} states that $\Lambda$ is an eigenvalue of $G^{(k)}$.
\end{proof}


For the case of $ k = 3$, Theorem \ref{thm1} shows that the eigenvalues of $ G^{(3)}$ are generated by the signed induced subgraphs (vertex-deleted subgraphs) of $ G $. It implies that the eigenvalue $\Lambda(G^{(3)})$ can be directly obtained by replacing $\rho_{V}(G) $ with $ \rho_{E}(G) $ in \eqref{eq4}.

\begin{thm}\label{thm4}
Let $G$ be a connected graph with $n$ vertices. Then
\begin{equation*}
  \Lambda(G^{(3)})=\begin{cases}
               \sqrt[3]{\rho_{V}(G)^2}, & \mbox{if $G$ is a tree,} \\
                                 \max\{\sqrt[3]{\rho_{V}(G)^2}, \sqrt[3]{\lambda_n(G)^2}\}, & \mbox{if $G$ is an odd-unicyclic graph,}  \\
                                 \max\{\sqrt[3]{\rho_{V}(G)^2},\sqrt[3]{\rho_{\Gamma}(G)^2}\}, & \mbox{if $G$ is bipartite but not a tree,}  \\
                                 \max\{ \sqrt[3]{\rho_{V}(G)^2},\sqrt[3]{\rho_{\Gamma}(G)^2},\sqrt[3]{\lambda_n(G)^2}\}, & \mbox{otherwise}.
             \end{cases}
\end{equation*}
\end{thm}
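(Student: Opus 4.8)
The plan is to run the same analysis as in Theorem \ref{eigenvalue}, but now invoking part (a) of Theorem \ref{thm1} in place of part (b). For $k=3$ the eigenvalues of $G^{(3)}$ are exactly the numbers $\lambda$ with $\lambda^3=\sigma^2$, where $\sigma$ ranges over the eigenvalues of the signed \emph{induced} subgraphs of $G$. Since $|\lambda|=\sqrt[3]{\sigma^2}$ and $t\mapsto\sqrt[3]{t^2}$ is strictly increasing on $[0,\infty)$, ordering the moduli of the eigenvalues of $G^{(3)}$ is the same as ordering the moduli $|\sigma|$ of the eigenvalues of the signed induced subgraphs of $G$. The largest such modulus is $\rho(G)$, attained by $\sigma=\lambda_1(G)$ on the induced subgraph $G=G_+$ itself, giving $\rho(G^{(3)})=\sqrt[3]{\rho(G)^2}$; hence $\Lambda(G^{(3)})=\sqrt[3]{\Sigma^2}$, where $\Sigma$ denotes the second-largest modulus of an eigenvalue over all signed induced subgraphs of $G$.

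First I would enumerate the candidates for $\Sigma$, i.e., the eigenvalue moduli strictly below $\rho(G)$. The induced subgraphs of $G$ split into $G$ itself and the proper (vertex-deleted) induced subgraphs. For a proper induced subgraph $G-S$ with $S\neq\varnothing$, monotonicity of the spectral radius under vertex deletion gives $\rho(G-S)\le\rho(G-v)\le\rho_V(G)$ for any $v\in S$, and Lemma \ref{lemma1} (applied to each connected component) yields $\rho((G-S)_\pi)\le\rho(G-S)\le\rho_V(G)$ for every sign function $\pi$; the value $\rho_V(G)$ is attained by the all-positive signing of an optimal $G-v$. For $G$ itself, a balanced signing has spectrum $\{\lambda_i(G)\}$, contributing the sub-Perron moduli $\lambda_2(G)$ and $-\lambda_n(G)$ (the latter lying below $\rho(G)$ precisely when $G$ is non-bipartite), while every non-balanced signing $G_\pi$ contributes $\rho(G_\pi)\le\rho_\Gamma(G)$ when $\Gamma(G)\neq\varnothing$. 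Thus
\begin{align*}
\Sigma=\max\bigl\{\rho_V(G),\ \lambda_2(G),\ \rho_\Gamma(G)\ (\text{if }\Gamma(G)\neq\varnothing),\ -\lambda_n(G)\ (\text{if }G\text{ non-bipartite})\bigr\},
\end{align*}
each listed value being realized by an explicit signed induced subgraph.

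Next I would simplify and split into cases. By Proposition \ref{pro3}(a) we have $\lambda_2(G)<\rho_V(G)$, so the term $\lambda_2(G)$ is always dominated by $\rho_V(G)$ and may be dropped. Proposition \ref{pro2} tells us that $\Gamma(G)=\varnothing$ exactly when $G$ is a tree or odd-unicyclic, governing whether the $\rho_\Gamma(G)$ term appears, and bipartiteness governs whether the $-\lambda_n(G)$ term appears. Combining these two dichotomies yields the four stated cases: trees give $\Sigma=\rho_V(G)$; odd-unicyclic graphs (non-bipartite, $\Gamma$ empty) give $\Sigma=\max\{\rho_V(G),-\lambda_n(G)\}$; bipartite non-trees ($\Gamma$ nonempty, bipartite) give $\Sigma=\max\{\rho_V(G),\rho_\Gamma(G)\}$; and the remaining graphs give $\Sigma=\max\{\rho_V(G),\rho_\Gamma(G),-\lambda_n(G)\}$. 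Substituting into $\Lambda(G^{(3)})=\sqrt[3]{\Sigma^2}$ and using monotonicity to distribute $\sqrt[3]{(\cdot)^2}$ over the maxima produces the displayed formula.

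The main point to watch — and the only real departure from the $k\ge4$ argument — is that here the subgraph contribution is the smaller quantity $\rho_V(G)$ rather than $\rho_E(G)$. In Theorem \ref{eigenvalue} the strict inequalities $-\lambda_n(G)<\rho_E(G)$ and $\rho_\Gamma(G)<\rho_E(G)$ of Proposition \ref{pro3}(b),(c) collapse the maximum to the single term $\rho_E(G)^2$; since $\rho_V(G)\le\rho_E(G)$, those inequalities no longer force $-\lambda_n(G)$ or $\rho_\Gamma(G)$ below $\rho_V(G)$. Hence the three surviving candidates are genuinely incomparable in general, the maxima cannot be simplified further, and one must verify that each is actually attained by some signed induced subgraph so that $\Sigma$ is indeed the second-largest modulus rather than merely an upper bound.
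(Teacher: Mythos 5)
Your proposal is correct and takes essentially the same approach as the paper: the paper obtains Theorem \ref{thm4} by invoking Theorem \ref{thm1}(a) and substituting $\rho_{V}(G)$ for $\rho_{E}(G)$ in the candidate list \eqref{eq4}, with $\lambda_2(G)$ eliminated by Proposition \ref{pro3}(a) and the case split governed by Proposition \ref{pro2} and bipartiteness, exactly as you argue. Your write-up merely fills in the details the paper leaves implicit in its one-sentence derivation, and your closing observation is apt: since only $\rho_{V}(G)\leq\rho_{E}(G)$ holds, Proposition \ref{pro3}(b),(c) no longer collapse the maximum, so the several candidates genuinely survive for $k=3$.
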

\noindent \textbf{Remark:} In Appendix A, we provide some examples to demonstrate that all candidates in Theorem \ref{thm4} are potential.

For a graph $G=(V,E)$ and $e \in E$, we use $N_e$ to denote the set of added vertices of $G^{(k)}$ on the edge $e$. Thus, the set $e \cup N_e$ is a hyperedge of $G^{(k)}$.
Recall that $x^{S}=\prod_{s\in S}{x_s}$ for $S \subseteq V(G^{(k)})$.
By $E_i(G^{(k)})$, we denote the set of hyperedges containing $i$.
Then it follows that $(\lambda, \mathbf{x})$ is an eigenpair of $G^{(k)}$ if and only if
\begin{align}\label{noncore}
  \lambda x_i^{k-1}&= \sum_{h \in E_i(G^{(k)})}x^{h\setminus \{i\}} \notag \\
   & =\sum_{j:\{i,j\}\in E}x_jx^{N_{\{i,j\}}}
\end{align}
for every $i \in V$ and
\begin{align}\label{core}
  \lambda x_{v}^{k-1} & =x_ix_jx^{N_{\{i,j\}}\setminus \{v\}}
\end{align}
for every $v \in N_{\{i,j\}}$ and $\{i,j\} \in E$.

The vertices with degree one in a power hypergraph are called core vertices, which include both the original pendant vertices and the newly added vertices.
Next, we will show that the zero entries of the eigenvectors corresponding to $\Lambda$ indicate all the core vertices that lie on some weakest edge, which gives a way to identify the weakest edges of the graph using the eigenvectors of the hypergraph.

\begin{thm}\label{eigenvector}
Let $G \neq K_2$ be a connected graph.
Let $\Lambda$ be the second-largest modulus among eigenvalues of $G^{(k)}$ for $k \geq 4$.
For any eigenvector $\mathbf{x}=(x_v)$ corresponding to $\Lambda$,  there exists a weakest edge $e$ of $G$ such that the set $\{v \in V(G^{(k)}):x_v=0\}$
consists precisely of all the  core vertices that lie on  $e$.

\end{thm}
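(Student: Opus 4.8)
The plan is to reduce the nonlinear eigenvector equations \eqref{noncore}--\eqref{core} of $G^{(k)}$ at the eigenvalue $\Lambda$ to an ordinary signed-graph eigenvalue problem at the scalar $\rho_{E}(G)=\max_{e}\rho(G-e)$, and then to use the spectral comparisons of Proposition \ref{pro3} and Lemma \ref{lemma1} to force the support of $\mathbf{x}$ to be $G$ with a single weakest edge deleted. First I would record a local dichotomy on each edge: fixing $\{i,j\}\in E$ and writing $P_{ij}=x^{N_{\{i,j\}}}$, multiplying \eqref{core} by $x_v$ gives $\Lambda x_v^{k}=x_ix_jP_{ij}$ for every $v\in N_{\{i,j\}}$, so the right-hand side is independent of $v$; hence either $x_v=0$ for all $v\in N_{\{i,j\}}$ (call $\{i,j\}$ \emph{inactive}) or $x_v\neq0$ for all of them (call it \emph{active}), and in the active case $x_i,x_j\neq0$. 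By \eqref{noncore} an inactive edge contributes nothing, so every vertex with $x_i\neq0$ meets an active edge; thus the active-edge subgraph $H$ with vertex set $S=\{i\in V:x_i\neq0\}$ has no isolated vertices.

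Next comes the reduction. Put $z_i=x_i^{k}$ and $\alpha_{ij}=x_ix_jP_{ij}/\Lambda$ (so $\alpha_{ij}=x_v^{k}$ for $v\in N_{\{i,j\}}$). Taking the product over the $k-2$ added vertices gives $P_{ij}^{k}=\alpha_{ij}^{k-2}$, and substituting this into $\alpha_{ij}^{k}=(x_ix_jP_{ij}/\Lambda)^{k}$ yields the clean identity $\alpha_{ij}^{2}=z_iz_j/\Lambda^{k}$, while multiplying \eqref{noncore} by $x_i$ turns it into $z_i=\sum_{j}\alpha_{ij}$ over active neighbours. Since $\Lambda^{k}=\rho_{E}(G)^{2}$ by Theorem \ref{eigenvalue}, writing $\sigma=\rho_{E}(G)$ and fixing a square root $w_i$ of $z_i$ (nonzero on $S$) lets me write $\sigma\alpha_{ij}=\epsilon_{ij}w_iw_j$ with $\epsilon_{ij}\in\{\pm1\}$ symmetric; dividing $z_i=\sum_j\alpha_{ij}$ by $w_i$ gives $\sigma w_i=\sum_{j}\epsilon_{ij}w_j$. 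Thus $(\sigma,\mathbf{w})$ is an eigenpair, with $\mathbf{w}$ nowhere zero on $S$, of the signed graph $H_{\pi}$ carrying the signs $\epsilon_{ij}$. Taking moduli gives $A(H_{+})\mathbf{w}^{*}\ge\sigma\mathbf{w}^{*}$ with $\mathbf{w}^{*}>0$, so $\rho(H)\ge\sigma=\rho_{E}(G)$ by Collatz--Wielandt.

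I then pin down $H$. First I rule out $H=G$: then $G_{\pi}$ has eigenvalue $\sigma$ with $|\sigma|=\rho_{E}(G)$, and if $\rho(G_{\pi})<\rho(G)$ then $G_{\pi}\in\Gamma(G)$ and $\rho_{E}(G)=|\sigma|\le\rho_{\Gamma}(G)<\rho_{E}(G)$ by Proposition \ref{pro3}(c), a contradiction; while if $\rho(G_{\pi})=\rho(G)$ then $G_{\pi}$ is switching (anti)balanced by Lemma \ref{lemma1}, so $\sigma$ is, up to sign, an eigenvalue of $G$, which is impossible because Proposition \ref{pro3}(a),(b) (together with $\rho_{E}(G)<\rho(G)$ and spectral symmetry in the bipartite case) show $\rho_{E}(G)$ is not the modulus of any eigenvalue of $G$. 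Hence $H$ is a proper subgraph; choosing any $e\notin E(H)$ gives $H\subseteq G-e$ and the squeeze $\rho_{E}(G)\le\rho(H)\le\rho(G-e)\le\rho_{E}(G)$, so $\rho(G-e)=\rho_{E}(G)$, i.e.\ $e$ is a weakest edge. Moreover, since $\mathbf{w}$ is nowhere zero, every component $H_c$ of $H$ hosts the eigenvalue $\sigma$, forcing $\rho(H_c)\ge\sigma=\rho_{E}(G)$ and hence $\rho(H_c)=\rho_{E}(G)$; as each connected $H_c\subseteq G-e$ attains the spectral radius of its component of $G-e$, strict monotonicity of $\rho$ forces $H_c$ to be that entire component.

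Finally I translate back. If $e$ is not a bridge, $G-e$ is connected, so $H=G-e$, $S=V$, the only inactive edge is $e$, its endpoints have degree $\ge2$, and the zero set is exactly $N_e$. If $e$ is a bridge, I claim one endpoint is pendant: otherwise both sides of $G-e$ carry an edge, and deleting an edge from the smaller-$\rho$ side leaves the other side with a pendant vertex attached through $e$, whose spectral radius strictly exceeds $\rho_{E}(G)$, contradicting the definition of $\rho_{E}(G)$. With pendant endpoint $a$, the trivial side $\{a\}$ cannot be a component of $H$, so $H$ is the nontrivial side and the zero set is exactly $\{a\}\cup N_e$, namely the core vertices on $e$. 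The hard part is this last stage: upgrading ``$\rho(H)=\rho_{E}(G)$ with $H\subseteq G-e$'' to ``$H$ is exactly one full component-deletion'' and controlling the bridge case; the decisive ingredients are that the nowhere-zero eigenvector forces every component of $H$ to carry the full spectral radius $\rho_{E}(G)$, and the elementary fact that a weakest edge which is a bridge must have a pendant endpoint.
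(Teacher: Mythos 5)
Your proposal is correct and follows essentially the same route as the paper's own proof: the substitution $w_i^2=x_i^k$ (the paper's $y_i^2=x_i^k$, $\beta^2=\Lambda^k$) reduces \eqref{noncore}--\eqref{core} to a signed-graph eigenpair at $\rho_E(G)$ on the active subgraph, which is then identified via Lemma \ref{lemma1} and Proposition \ref{pro3} as $G$ minus a weakest edge, with the pendant/non-pendant dichotomy handled at the end exactly as in the paper. The only difference is one of detail, in your favor: you supply complete arguments (the Collatz--Wielandt bound $\rho(H)\geq\rho_E(G)$, ruling out $H=G$, the full-component argument, and the proof that a weakest cut edge must be pendant) for steps the paper merely asserts, namely that $S_{\pi}$ is switching equivalent to $G-e_0$ for some weakest edge $e_0$ and that ``the weakest edge $e_0$ is not a cut edge unless a pendant edge.''
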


\begin{proof}

Note that $G$ has more than one edge, which implies that $\Lambda > 0$.
Let $\beta$ be such that $\beta^2=\Lambda^k$, and let $y_i$ be such that $y_i^2=x_i^k$ for $i \in V(G)$.
Consider the induced subgraph $\widehat{G}$ on the vertices $i\in V(G)$ with $x_i \neq 0$. Note that $x_i=0$ for all $i \in V(G)$ is impossible, because of \eqref{core} and $\Lambda \neq 0$.

Using \eqref{core}, we have that
\begin{align*}
  \Lambda^{k-2}(x^{N_{\{i,j\}}})^{k-1} & =\prod_{v \in N_{\{i,j\}}} \Lambda x_v^{k-1} \\
   & = \prod_{v \in N_{\{i,j\}}}x_ix_jx^{N_{\{i,j\}} \setminus \{v\}} \\
   & = (x_ix_j)^{k-2}(x^{N_{\{i,j\}}})^{k-3},
\end{align*}
that is, $(x^{N_{\{i,j\}}})^{k-3}(\Lambda^{k-2}(x^{N_{\{i,j\}}})^{2}-(x_ix_j)^{k-2})=0$, and hence
\begin{align*}
  \left(\beta x_ix_jx^{N_{\{i,j\}}}\right)^{k-3}\left((\beta x_ix_jx^{N_{\{i,j\}}})^2-(\Lambda y_iy_j)^2\right)=0.
\end{align*}
Therefore, we have that
\begin{align}\label{eq13}
\beta x_ix_jx^{N_{\{i,j\}}}=\mathrm{sgn}(i,j)(\Lambda y_iy_j),
\end{align}
where $\mathrm{sgn}(i,j) \in \{\pm 1,0\}$.
And, using \eqref{noncore}, we obtain that for every $i \in V(\widehat{G})$,
\begin{align*}
  \beta y_i& = \frac{\beta x_i}{\Lambda y_i}\Lambda x_i^{k-1}=\frac{\beta x_i}{\Lambda y_i}\sum_{j: \{i,j\} \in E(\widehat{G})}x_jx^{N_{\{i,j\}}}=\frac{1}{\Lambda y_i}\sum_{j: \{i,j\} \in E(\widehat{G})}\beta x_ix_jx^{N_{\{i,j\}}} ,
\end{align*}
that is,
\begin{align}\label{eq3.5}
\beta y_i =\sum_{j: \{i,j\} \in E(\widehat{G})}\mathrm{sgn}(i,j) y_j.
\end{align}
Thus, $(\beta, \mathbf{y})$ is an eigenpair of a spanning signed subgraph $S_{\pi}$ of $\widehat{G}$, where the sign function $\pi$ is defined by $\mathrm{sgn}(i,j)$.
By Theorem \ref{eigenvalue}, we have that $ \beta = \pm \sqrt[2]{\Lambda^k} = \pm \rho_E(G) $.
Without loss of generality, set $\beta = \rho_E(G) $.
Note that $\rho_{E}(G)=\max_{e \in E} \rho(G-e)$, so
there exists a weakest edge $e_0$ in $G$ such that $ S_{\pi} $ is switching equivalent to $ G - e_0 $.
Then, in \eqref{eq3.5}, $\mathrm{sgn}(i,j)=0$ if only if $\{i,j\}=e_0$.
By \eqref{eq13}, it follows that $ x^{N_{e}} = 0 $ if only if $e=e_0$.
Using \eqref{core}, $ x^{N_{e_0}} = 0 $ implies $ x_v = 0 $ for all $ v \in N_{e_0} $.

Note that the weakest edges $e_0$ is not a cut edge unless a pendant edge.
If $e_0$ is a pendant edge, let $u$ denote the pendant vertex on $e_0$. Using \eqref{noncore}, we get $x_u=0$. Thus, we get $ \{v \in V(G^{(k)}) : x_v = 0\} = N_{e_0} \bigcup \{u\} $.
If $e_0$ is non-pendant,  we get $ \widehat{G} = G $, which implies $ x_i \neq 0 $ for all $ i \in V(G) $.
Thus, we  have $ \{v \in V(G^{(k)}) : x_v = 0\} = N_{e_0} $.
We conclude that $\{v \in V(G^{(k)}):x_v=0\}$ consists precisely of all the  core vertices that lie on  $e_0$.
\end{proof}


Let $E_w$ be the set of all weakest edges of $G$.
For $e \in E_w$, define \begin{equation*}
  \delta=\delta(e)=\begin{cases}
           0, & \mbox{if  $e$ is a pendant edge,}  \\
           1, & \mbox{otherwise}.
         \end{cases}
\end{equation*}
We use $G_e$ to denote the subgraph of $G$ obtained by removing $e$ when $\delta =1$, and by removing $e$ along with the corresponding pendant vertex when $\delta =0$. Note that $G_e$ is connected.
Let $V(G^{(k)})=\{1,2,\ldots, n\}$. For $\mathbf{x}=(x_1,\ldots,x_n)^\top \in \mathbb{V}_{\Lambda}(G^{(k)})$, denote the support set of $\mathbf{x}$ by $supp(\mathbf{x})=\{i: x_i \neq 0\}$.
By Theorem \ref{eigenvector}, we see that  $supp(\mathbf{x})$ is the set of vertices of graph $G$ excluding the core vertices on the weakest edge $e$.
Thus, the subhypergraph of $G^{(k)}$ induced by $supp(\mathbf{x})$ is the power hypergraph $G_e^{(k)}$.
Let $\hat{\mathbf{x}}$ denote the subvector of $\mathbf{x}$ obtained truncating entry $x_i$ from $\mathbf{x}$ for $i \in supp(\mathbf{x})$.
Note that $\Lambda=\rho(G_e^{(k)})$ is the spectral radius of $G_e^{(k)}$ from Theorem \ref{eigenvalue},
hence $\hat{\mathbf{x}} \in \mathbb{V}_{\rho}(G_e^{(k)})$.
Lemma \ref{xinlemma}(a) tells us that the dimension of $\mathbb{V}_{\rho}(G_e^{(k)})$ is zero, thus we obtain the following theorem.

\begin{thm}\label{finite}
Let $G \neq K_2$ be a connected graph.
Let $\Lambda$ be the second-largest modulus among eigenvalues of $G^{(k)}$ for $k \geq 4$.
Then the dimension of $\mathbb{V}_{\Lambda}$ is zero, i.e., there are finitely many eigenvectors of $G^{(k)}$ corresponding to $\Lambda$ up to a scalar.
\end{thm}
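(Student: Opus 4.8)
The plan is to reduce the second-largest-modulus eigenvariety of $G^{(k)}$ to the \emph{spectral-radius} eigenvariety of a smaller connected power hypergraph, and then invoke the zero-dimensionality result of Fan et al.\ recorded in Lemma \ref{xinlemma}(a). The paragraph preceding the statement already assembles the needed objects, so the task is to make the reduction precise and check that nothing leaks between the vanishing and surviving coordinates.

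First I would fix an arbitrary point $\mathbf{x}=(x_v)\in\mathbb{V}_{\Lambda}(G^{(k)})$ and apply Theorem \ref{eigenvector} to pin down its zero pattern: there is a weakest edge $e\in E_w$ such that $\{v:x_v=0\}$ is exactly the set of core vertices lying on $e$. Consequently $supp(\mathbf{x})=V(G_e^{(k)})$, and the subhypergraph of $G^{(k)}$ induced on $supp(\mathbf{x})$ is precisely the connected power hypergraph $G_e^{(k)}$. The key observation is that this zero pattern forces the eigenvalue equations \eqref{noncore} and \eqref{core} indexed by the surviving vertices to close up within $G_e^{(k)}$: any hyperedge of $G^{(k)}$ that meets $supp(\mathbf{x})$ but is not entirely contained in it must be the hyperedge on $e$, hence contains a vanishing core coordinate, so its monomial contribution drops out. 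Thus the truncation $\hat{\mathbf{x}}$ satisfies the eigenvalue equations of $G_e^{(k)}$ with eigenvalue $\Lambda$.

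Next I would identify $\Lambda$ with the spectral radius of $G_e^{(k)}$. Since $G_e$ differs from $G-e$ only by an isolated vertex when $e$ is pendant, we have $\rho(G_e)=\rho(G-e)=\rho_{E}(G)$; combining this with $\rho(G_e^{(k)})=\sqrt[k]{\rho(G_e)^2}$ and Theorem \ref{eigenvalue} gives $\Lambda=\sqrt[k]{\rho_{E}(G)^2}=\rho(G_e^{(k)})$. Therefore $\hat{\mathbf{x}}$, viewed projectively, lies in $\mathbb{V}_{\rho}(G_e^{(k)})$. Because $G_e^{(k)}$ is connected, Lemma \ref{xinlemma}(a) guarantees that $\mathbb{V}_{\rho}(G_e^{(k)})$ is zero-dimensional, i.e.\ a finite set of projective points.

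Finally I would assemble the pieces. For each fixed $e\in E_w$, the truncation map $\mathbf{x}\mapsto\hat{\mathbf{x}}$ is injective on the stratum of $\mathbb{V}_{\Lambda}(G^{(k)})$ whose zero pattern is the core of $e$, since the deleted coordinates are all zero and so $\mathbf{x}$ is recovered from $\hat{\mathbf{x}}$; moreover its image lies in the finite set $\mathbb{V}_{\rho}(G_e^{(k)})$. As $E_w$ is finite, $\mathbb{V}_{\Lambda}(G^{(k)})$ is a finite union of finite strata, hence a finite set of projective points, which is exactly the assertion that its dimension is zero. I expect the main obstacle to be the verification in the second step that no surviving eigenvalue equation is coupled to a vanishing coordinate in a way that alters the restricted system; this is precisely what the sharp description in Theorem \ref{eigenvector}, namely that the zeros form the core of a \emph{single} weakest edge with $G_e$ connected, is designed to guarantee.
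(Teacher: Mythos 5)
Your proof is correct and follows essentially the same route as the paper: both use Theorem \ref{eigenvector} to pin the zero pattern to the core vertices of a single weakest edge $e$, identify $\Lambda=\rho(G_e^{(k)})$ via Theorem \ref{eigenvalue}, and invoke Lemma \ref{xinlemma}(a) together with the finiteness of $E_w$. The only cosmetic difference is directional --- the paper builds the extension map $\phi:\bigcup_{e\in E_w}\mathbb{V}_{\rho}(G_e^{(k)})\rightarrow\mathbb{V}_{\Lambda}$ and proves it surjective, while you work with the inverse truncation map $\mathbf{x}\mapsto\hat{\mathbf{x}}$ and check injectivity on each stratum --- and your explicit verification that the straddling hyperedge's monomial drops out is a detail the paper leaves as ``one can easily check.''
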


\begin{proof}
Let $\mathbb{V}_{\rho}(G_e^{(k)})$ be the  projective eigenvarity associated with the spectral radius of $G_e^{(k)}$. Let $\mathbf{0}_{e}$ be the $(k-1-\delta)$-dimensional zero vector.
For any $e \in E_w$ and any $\mathbf{x}_e \in \mathbb{V}_{\rho}(G_e^{(k)})$, the vector $\mathbf{x}=\mathbf{x}_e\oplus \mathbf{0}_{e}$ is an eigenvector of $G^{(k)}$ corresponding to $\Lambda$, as one can easily check.
Then we find a map
\begin{align}\label{eq14}
  \phi: \bigcup_{e \in E_w}\mathbb{V}_{\rho}(G_e^{(k)}) \rightarrow \mathbb{V}_{\Lambda}, \mathbf{x}_{e} \mapsto \mathbf{x}.
\end{align}
For each $\mathbf{x} \in \mathbb{V}_{\Lambda}$,  recall that $\hat{\mathbf{x}}$ denote the subvector of $\mathbf{x}$ obtained truncating entry $x_i$ from $\mathbf{x}$ for $i \in supp(\mathbf{x})$.  There exists $e \in E_w$ such that $\hat{\mathbf{x}} \in \mathbb{V}_{\rho}(G_e^{(k)})$.
So, $\phi$ is a surjective map, indeed, $\phi$ is a bijective map. For each $e \in E_w$, $\mathbb{V}_{\rho}(G_e^{(k)})$ is finite set by Lemma \ref{xinlemma}(a), it is following that  $\mathbb{V}_{\Lambda}$ is finite.
\end{proof}



\section{The algebraic multiplicity of $\Lambda$ and the total multiplicity of its eigenvector}

Let $\mathrm{am}_{\Lambda}(G^{(k)})$ denote the algebraic multiplicity of the eigenvalue $\Lambda$ for $G^{(k)}$.
Theorem \ref{finite} shows that the dimension of $\mathbb{V}_{\Lambda}(G^{(k)})$ is zero.
For the zero-dimensional projective eigenvariety $\mathbb{V}_{\Lambda}(G^{(k)})$, we use $m_{\mathbb{V}_{\Lambda}}(\mathbf{p})$ to denote the multiplicity of point $\mathbf{p}$ in $\mathbb{V}_{\Lambda}(G^{(k)})$.
Let $\#\mathbb{V}_{\Lambda}(G^{(k)})$ denote the total multiplicity of eigenvectors of $G^{(k)}$ corresponding to $\Lambda$, i.e., $\#\mathbb{V}_{\Lambda}(G^{(k)})=\sum_{\mathbf{p} \in \mathbb{V}_{\Lambda}(G^{(k)}) } m_{\mathbb{V}_{\Lambda}}(\mathbf{p})$.
In this section, we express $\mathrm{am}_{\Lambda}(G^{(k)})$ and $\#\mathbb{V}_{\Lambda}(G^{(k)})$ in terms of the number of weakest edges of $G$, there by showing that $\mathrm{am}_{\Lambda}(G^{(k)})=\#\mathbb{V}_{\Lambda}(G^{(k)})$.
The main result of this section is shown as follows.

\begin{thm}\label{multiplicity}
For a connected graph $G=(V,E)$ with $|E|>1$, let $n_0$ and $n_1$ be the number of pendent and non-pendent weakest edges of $G$, respectively.
Let $\Lambda$ be the second-largest modulus among eigenvalues of $G^{(k)}$ for $k \geq 4$.
Denote the algebraic multiplicity of the eigenvalue $\Lambda$ by $\mathrm{am}_{\Lambda}(G^{(k)})$, and the total multiplicity of eigenvectors of $G^{(k)}$ in $\mathbb{V}_{\Lambda}(G^{(k)})$ by $\#\mathbb{V}_{\Lambda}(G^{(k)})$.
Then
\begin{equation*}
\mathrm{am}_{\Lambda}(G^{(k)})=\#\mathbb{V}_{\Lambda}(G^{(k)})=\sum_{\delta=0}^{1}f_{\delta}(k)n_{\delta},
\end{equation*}
where $f_{\delta}(k)=k^{|E|(k-3)+|V|-1}\left({(k-1)^{k-1-\delta}k^{2-k+\delta}-2^{\delta}}\right)$ for $\delta= 0, 1$.
\end{thm}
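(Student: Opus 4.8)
The plan is to compute $\#\mathbb{V}_{\Lambda}(G^{(k)})$ and $\mathrm{am}_{\Lambda}(G^{(k)})$ by two independent routes, show each equals $\sum_{\delta}f_{\delta}(k)n_{\delta}$, and read off their equality. Write $N=|E|(k-3)+|V|-1$, so $\mathrm{am}_{\rho}(G^{(k)})=k^{N}$ by Theorem \ref{thm2}. The underlying combinatorial fact I would first record is that the connected proper subgraphs $H$ of $G$ with $\rho(H)=\rho_{E}(G)$ are exactly the $n_{1}$ graphs $G-e$ ($e$ a non-pendant weakest edge) and the $n_{0}$ graphs $G_{e}=G-e-u$ ($e$ a pendant weakest edge): any connected $H\subsetneq G$ satisfies $\rho(H)\le\rho_{E}(G)$ by Proposition \ref{pro3}(a), and equality forces $H$ maximal by strict Perron--Frobenius monotonicity, leaving only these two families; all other proper connected subgraphs have $\rho<\rho_{E}(G)$.

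For $\#\mathbb{V}_{\Lambda}$ I would start from the bijection $\phi$ of \eqref{eq14}, which partitions $\mathbb{V}_{\Lambda}$ into blocks indexed by $e\in E_{w}$, the block of $e$ being in bijection with $\mathbb{V}_{\rho}(G_{e}^{(k)})$ and hence having $|\mathbb{V}_{\rho}(G_{e}^{(k)})|$ points by Lemma \ref{xinlemma}(c). The new content is the point multiplicity $m_{\mathbb{V}_{\Lambda}}(\mathbf{p})$, which now exceeds the value $1$ it carries in $\mathbb{V}_{\rho}$. Fixing $\mathbf{p}=\phi(\mathbf{x}_{e})$, whose zero coordinates are the $k-1-\delta$ core vertices on $e$, I would dehomogenize at a support coordinate and split the affine variables into a support block $\mathbf{y}$ and a core block $\mathbf{t}$. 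With the core monomials set to $0$ the support equations are precisely the defining equations of $\mathbb{V}_{\rho}(G_{e}^{(k)})$, at which $\mathbf{x}_{e}$ is reduced by Lemma \ref{xinlemma}(b); their Jacobian is therefore nonsingular, and the implicit function theorem solves $\mathbf{y}=\mathbf{y}(\mathbf{t})$ analytically with $\mathbf{y}(\mathbf{0})=\mathbf{x}_{e}$. Substituting back, $m_{\mathbb{V}_{\Lambda}}(\mathbf{p})$ equals the multiplicity at the origin of the $k-1-\delta$ core equations \eqref{noncore}--\eqref{core}, whose surviving coefficients $x_{i}x_{j}$ (resp.\ $x_{i}$) are units.

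To evaluate that local multiplicity I would projectivize the core system and apply B\'ezout. The homogenized system has no zero at infinity (there each equation forces its core coordinate to vanish), so all $(k-1)^{k-1-\delta}$ intersection points are affine; rewriting the core relations as $\Lambda t^{k}=(\text{unit})\cdot(\text{product of all core coordinates})$ shows every nonzero solution has all core coordinates nonzero with equal $k$-th powers, and counting the admissible root-of-unity tuples under the single product constraint gives $2k^{k-3}$ simple nonzero solutions when $\delta=1$ and $k^{k-2}$ when $\delta=0$. Subtracting, $m_{\mathbb{V}_{\Lambda}}(\mathbf{p})=(k-1)^{k-1-\delta}-2^{\delta}k^{k-2-\delta}=:m_{\delta}$; multiplying the block size $|\mathbb{V}_{\rho}(G_{e}^{(k)})|$ by $m_{\delta}$ yields exactly $f_{\delta}(k)$, and summing over $E_{w}$ gives $\#\mathbb{V}_{\Lambda}=\sum_{\delta}f_{\delta}(k)n_{\delta}$. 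For $\mathrm{am}_{\Lambda}$ I would use spectral moments: since $\Lambda$ is the second largest modulus and $\mu^{k\ell}=|\mu|^{k\ell}$ for every eigenvalue $\mu$, Proposition \ref{lem6} and Theorem \ref{thm2} give
\[
k\,\mathrm{am}_{\Lambda}(G^{(k)})=\lim_{\ell\to\infty}\left(\frac{\mathrm{S}_{k\ell}(G^{(k)})}{\rho_{E}(G)^{2\ell}}-k^{N+1}\Big(\tfrac{\rho(G)^{2}}{\rho_{E}(G)^{2}}\Big)^{\ell}\right).
\]
Expanding $\mathrm{S}_{k\ell}(G^{(k)})$ by Theorem \ref{thm5}, the order-$\rho_{E}(G)^{2\ell}$ contributions come from the leading terms $g(H,k)N_{G}(H)2^{|V(H)|-|E(H)|}$ of the subgraphs $H$ with $\rho(H)=\rho_{E}(G)$ above, \emph{and} from the second-order term of $p_{2\ell}(G)$ in the $H=G$ summand. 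For the latter I would invoke \eqref{eq6} with Lemmas \ref{lem5}--\ref{lem4}: by Proposition \ref{pro3} no signed graph on $G$ has an eigenvalue of modulus between its second largest signed modulus and $\rho_{E}(G)$, so $P_{2\ell}(G)$ carries no $\rho_{E}(G)^{2\ell}$-term, forcing the $\rho_{E}(G)^{2\ell}$-coefficient of $p_{2\ell}(G)$ to be $-\sum_{\rho(H)=\rho_{E}(G)}2^{|V(H)|-|E(H)|}$. Combining the sources, the $\rho(G)^{2\ell}$-parts cancel the subtracted term and the remainder is $\sum_{\delta}kf_{\delta}(k)n_{\delta}$, whence $\mathrm{am}_{\Lambda}=\sum_{\delta}f_{\delta}(k)n_{\delta}=\#\mathbb{V}_{\Lambda}$.

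The main obstacle is the local multiplicity computation: justifying the clean separation of support and core variables, i.e.\ that reducedness in Lemma \ref{xinlemma}(b) permits eliminating the support block by the implicit function theorem without altering the multiplicity, and then checking that every nonzero solution of the core system is simple so that the B\'ezout bookkeeping is exact. A close second is spotting, in the $\mathrm{am}_{\Lambda}$ route, that the $H=G$ summand contributes at order $\rho_{E}(G)^{2\ell}$ through the \emph{negative} second-order term of $p_{2\ell}(G)$; omitting it makes the two answers disagree by precisely $(2n_{1}+n_{0})k^{N+1}$, and Proposition \ref{pro3} is exactly what pins this term down.
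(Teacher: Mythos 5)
Your proposal is correct and follows essentially the same two-part route as the paper: the algebraic multiplicity via the spectral-moment limit of Theorem \ref{thm5}, with the crucial negative second-order term of $p_{2\ell}(G)$ extracted through \eqref{eq6}, Lemmas \ref{lem5}--\ref{lem4} and Proposition \ref{pro3}, and the total multiplicity via the bijection \eqref{eq14} together with the product formula $m_{\mathbb{V}_{\Lambda}}(\mathbf{p})=m_{\mathbb{V}_{\rho}(G_e^{(k)})}(\hat{\mathbf{p}})\,m_{\mathcal{V}_{\delta}}(\mathbf{0})$ and the B\'ezout counts $(k-1)^{k-1-\delta}-2^{\delta}k^{k-2-\delta}$ of Lemmas \ref{variety1}--\ref{variety2}. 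The only genuine addition is that you sketch a justification (reducedness plus the implicit function theorem to eliminate the support block) for the product formula that the paper asserts without proof; your B\'ezout-plus-simple-roots computation of the core-variety multiplicities is the same argument the paper uses to prove Lemma \ref{variety2}.
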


We will determine $\mathrm{am}_{\Lambda}=\mathrm{am}_{\Lambda}(G^{(k)})$ and $\#\mathbb{V}_{\Lambda}=\#\mathbb{V}_{\Lambda}(G^{(k)})$ separately.
First, we give the algebraic multiplicity $\mathrm{am}_{\Lambda}$ by the spectral moments of $G^{(k)}$.

\vspace{3mm}
\noindent{\textbf{Part 1 of Theorem \ref{multiplicity}}} $\mathrm{am}_{\Lambda}=\sum_{\delta=0}^{1}f_{\delta}(k)n_{\delta}$.
\begin{proof}
 Proposition \ref{lem6} and Theorem \ref{thm2} show  that there are $k^{|E|(k-3)+|V|}$ eigenvalues of $G^{(k)}$ whose modulus are equal to $\rho(G^{(k)})$, and there are $k\mathrm{am}_{\Lambda}$ eigenvalues of $G^{(k)}$ whose modulus are equal to $\Lambda(G^{(k)})$.
Recall that $\rho(G^{(k )})$ and $\Lambda(G^{(k)})$ are the largest and second-largest modulus of eigenvalues of $G^{(k)}$.
Then we have that
 \begin{align*}
     k\mathrm{am}_{\Lambda}& =\lim_{\ell\rightarrow\infty} \frac{\mathrm{S}_{k\ell}(G^{(k)})-k^{|E|(k-3)+|V|}\rho(G^{(k)})^{k\ell}}{\Lambda(G^{(k)})^{k\ell}}.
  \end{align*}
Note that $\rho(G^{(k)})^{k}=\rho(G)^{2}$ and $\Lambda(G^{(k)})^{k}={\rho_{E}(G)^{2}}$.
From Theorem \ref{thm5}, we have that
  \begin{align} \label{eq5}
     k\mathrm{am}_{\Lambda}=\lim_{\ell\rightarrow\infty}\frac{\sum_{\widehat{G} \in G (\ell)}g(\widehat{G},k)p_{2\ell}(\widehat{G})N_{G}(\widehat{G})-k^{|E|(k-3)+|V|}\rho(G)^{2\ell}}{\rho_{E}(G)^{2\ell}},
  \end{align}
where $g(\widehat{G},k)=2^{|E(\widehat{G})|-|V(\widehat{G})|}(k-1)^{|V|-|V(\widehat{G})|+(k-2)(|E|-|E(\widehat{G})|)}k^{|V(\widehat{G})|+|E(\widehat{G})|(k-3)}$.
For $\widehat{G} \in G(\ell)$ satisfying $\rho(\widehat{G})<\rho_{E}(G)$, from Lemma \ref{lem4}, we know that $p_{2\ell}(\widehat{G})$ has no contribution to \eqref{eq5}.
Let $\mathcal{G}_{\Lambda}=\{\widehat{G}\in G(\ell) :\mbox{$\rho(\widehat{G}) =\rho_E(G)$}\}$.
Then we have that
\begin{align}\label{eq7}
k\mathrm{am}_{\Lambda}& =\lim_{\ell\rightarrow\infty}\frac{\sum_{\widehat{G} \in \mathcal{G}_{\Lambda}}g(\widehat{G},k)p_{2\ell}(\widehat{G})N_{G}(\widehat{G})-k^{|E|(k-3)+|V|}\left(\rho(G)^{2\ell}-2^{|E|-|V|}p_{2\ell}(G)\right)}{\rho_{E}(G)^{2\ell}}.
\end{align}
By \eqref{eq6} and Lemma \ref{lem5}, we have that
\begin{align}\label{eq11}
&\lim_{\ell\rightarrow\infty}\frac{2^{|E|-|V|}p_{2\ell}(G)}{\rho_{E}(G)^{2\ell}} \notag \\
&=\lim_{\ell\rightarrow\infty}\frac{2^{|E|-|V|}P_{2\ell}(G)-\sum_{\widehat{G} \in \mathcal{G}_{\Lambda}}2^{|E|-|V|}p_{2\ell}(\widehat{G})N_{G}(\widehat{G})}{\rho_{E}(G)^{2\ell}}\notag\\
&=\lim_{\ell\rightarrow\infty}\frac{2^{-|V|}\sum_{\pi \in \Pi} \mathrm{S}_{2\ell}(G_{\pi}) -\sum_{\widehat{G} \in \mathcal{G}_{\Lambda}}2^{|E|-|V|}p_{2\ell}(\widehat{G})N_{G}(\widehat{G})}{\rho_{E}(G)^{2\ell}}.
\end{align}

Let $\Lambda_{\Pi}$ be the second-largest modulus among eigenvalues of $G_{\pi}$ for all $\pi \in \Pi$.
From Lemmas \ref{lem5} and \ref{lem4},  it is following that among all eigenvalues of $G_{\pi}$ for all $\pi \in \Pi$,
there are $2^{|V|}$ eigenvalues whose modulus are equal to $\rho(G)$.
It implies that the order of magnitude of $\rho(G)^{2\ell}-2^{-|V|}\sum_{\pi \in \Pi}\mathrm{S}_{2\ell}(G_{\pi})$ is determined by $\Lambda_{\Pi}^{2\ell}$ when $ \ell\rightarrow\infty$.
The possible values for $\Lambda_{\Pi}$ are $\lambda_{2}(G)$, $-\lambda_{|V|}(G)$ if $G$ is non-bipartite, and $\rho_{\Gamma}(G)$ if $\Gamma(G)$ is not empty.
From Proposition \ref{pro3}, we have $ {\rho_{E}(G)} > \Lambda_{\Pi} $ in any case.
Hence, we have
\begin{align}\label{eq12}
\lim_{\ell\rightarrow\infty}\frac{\rho(G)^{2\ell}-2^{-|V|}\sum_{\pi \in \Pi} \mathrm{S}_{2\ell}(G_{\pi}) }{\rho_{E}(G)^{2\ell}}=0.
\end{align}
By \eqref{eq11} and \eqref{eq12}, we have that
\begin{equation}\label{eq8}
\lim_{\ell\rightarrow\infty}\frac{\rho(G)^{2\ell}-2^{|E|-|V|}p_{2\ell}(G)}{\rho_{E}(G)^{2\ell}}=\lim_{\ell\rightarrow\infty}\frac{ \sum_{\widehat{G} \in \mathcal{G}_{\Lambda}}2^{|E|-|V|}p_{2\ell}(\widehat{G})N_{G}(\widehat{G})}{\rho_{E}(G)^{2\ell}}.
\end{equation}
Substituting \eqref{eq8} into \eqref{eq7}, we have that
\begin{equation}\label{eq9}
  k\mathrm{am}_{\Lambda} =\lim_{\ell\rightarrow\infty}\frac{\sum_{\widehat{G} \in \mathcal{G}_{\Lambda}}\left(g(\widehat{G},k)-2^{|E|-|V|}k^{|E|(k-3)+|V|}\right)p_{2\ell}(\widehat{G})N_{G}(\widehat{G})}{\rho_{E}(G)^{2\ell}}.
\end{equation}
For any ${\widehat{G} \in \mathcal{G}_{\Lambda}}$, Lemma \ref{lem4} tells us that $$\lim_{\ell\rightarrow\infty}\frac{p_{2\ell}(\widehat{G})}{\rho_{E}(G)^{2\ell}}=\lim_{\ell\rightarrow\infty}\frac{p_{2\ell}(\widehat{G})}{\rho(\widehat{G})^{2\ell}}=2^{|V(\widehat{G})|-|E(\widehat{G})|}.$$ Recall that \begin{equation*}
  \delta=\delta(e)=\begin{cases}
           0, & \mbox{if  $e$ is a pendant edge,}  \\
           1, & \mbox{otherwise}.
         \end{cases}
\end{equation*}
We note that $\widehat{G}$ is the subgraph of $G$ obtained by removing one weakest edge $e$ when $\delta=1$, or by removing  $e$ along with the pendant vertex when $\delta=0$.
So, for ${\widehat{G} \in \mathcal{G}_{\Lambda}}$, we have $|E(\widehat{G})|=|E|-1$ and $|V(\widehat{G})|=|V|-1+\delta$.
Reorganizing \eqref{eq9}, we complete the proof.
\end{proof}

We use the following two lemmas to characterize the multiplicities of points in $\mathbb{V}_{\Lambda}$.

\begin{lem}\cite[Theorem 3.1]{CHEN2021}\label{variety1}
Let $S=\{1,2,\ldots, k-1\}$.
For $i \in S$ and $\mu \neq 0$, let $f_i=\mu x_i^{k-1}-x^{S \setminus \{i\}}$.
Let $\mathcal{V}$ be the affine variety defined by $f_1,\ldots, f_{k-1}$.
Then $\sum_{\mathbf{0} \neq \mathbf{p} \in \mathcal{V}}m_{\mathcal{V}}(\mathbf{p})=k^{k-2}$ and $m_{\mathcal{V}}(\mathbf{0})=(k-1)^{k-1}-k^{k-2}$.
\end{lem}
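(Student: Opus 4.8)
The plan is to describe the variety $\mathcal{V}$ completely, read off the multiplicities of its nonzero points from a Jacobian computation, and then recover $m_{\mathcal{V}}(\mathbf{0})$ from an affine B\'ezout count for the whole system.

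First I would reduce the defining system. Writing $P=\prod_{i\in S}x_i$ and multiplying the $k-1$ equations $\mu x_i^{k-1}=x^{S\setminus\{i\}}$ together yields $\mu^{k-1}P^{k-1}=P^{k-2}$, i.e. $P^{k-2}\left(\mu^{k-1}P-1\right)=0$. A short case analysis shows that no point of $\mathcal{V}$ can have some (but not all) coordinates equal to zero: if $x_i\neq 0$ while some $x_j=0$ with $j\neq i$, then $x^{S\setminus\{i\}}=0$ forces $\mu x_i^{k-1}=0$, contradicting $\mu\neq 0$. Hence the points of $\mathcal{V}$ are exactly $\mathbf{0}$ together with those all of whose coordinates are nonzero. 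For the latter $P\neq 0$, so $\mu^{k-1}P=1$; combined with $\mu x_i^{k}=P$ this gives $x_i^{k}=\mu^{-k}$, i.e. $x_i=\mu^{-1}\omega_i$ with $\omega_i^{k}=1$ and $\prod_i\omega_i=1$. Counting $(k-1)$-tuples of $k$-th roots of unity with product $1$ gives exactly $k^{k-2}$ such points.

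Next I would show each nonzero point is simple. At such a point the Jacobian $J=(\partial f_i/\partial x_l)$ has $J_{ii}=\mu(k-1)x_i^{k-2}=(k-1)P/x_i^{2}$ and $J_{il}=-x^{S\setminus\{i\}}/x_l=-P/(x_ix_l)$ for $l\neq i$, using $\mu x_i^{k}=P$. Thus $J=P\,D\,C\,D$ with $D=\mathrm{diag}(x_i^{-1})$ and $C=kI-\mathbf{1}\mathbf{1}^{\top}$ the $(k-1)\times(k-1)$ matrix with diagonal entry $k-1$ and off-diagonal entry $-1$. Since the eigenvalues of $C$ are $1$ and $k$ (the latter with multiplicity $k-2$), we get $\det C=k^{k-2}\neq 0$ and hence $\det J=P^{k-3}k^{k-2}\neq 0$. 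Therefore every nonzero point of $\mathcal{V}$ has multiplicity $1$, which proves $\sum_{\mathbf{0}\neq\mathbf{p}\in\mathcal{V}}m_{\mathcal{V}}(\mathbf{p})=k^{k-2}$.

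Finally I would pin down $m_{\mathcal{V}}(\mathbf{0})$ by an affine B\'ezout argument. Each $f_i$ has degree $k-1$ with leading form $\mu x_i^{k-1}$, so the only common projective zero of the leading forms has all $x_i=0$, which is not a point of the hyperplane at infinity; hence the homogenized system has no solutions at infinity and $\mathcal{V}$ is zero-dimensional. B\'ezout then gives $\sum_{\mathbf{p}\in\mathcal{V}}m_{\mathcal{V}}(\mathbf{p})=(k-1)^{k-1}$, and subtracting the nonzero contribution yields $m_{\mathcal{V}}(\mathbf{0})=(k-1)^{k-1}-k^{k-2}$. The main obstacle I anticipate is justifying that B\'ezout holds with equality: the whole count rests on verifying that the homogenized equations acquire no spurious intersection points on the hyperplane at infinity, together with the Jacobian nonvanishing that certifies the nonzero points are reduced. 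Once these two transversality facts are in place, both multiplicity formulas follow from the bookkeeping above.
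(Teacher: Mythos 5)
Your proof is correct. Note that the paper does not actually prove this lemma itself (it is imported from Theorem 3.1 of the cited reference); the closest in-paper argument is the proof of the companion Lemma \ref{variety2}, and your proposal follows that proof's skeleton exactly: homogenize and check that the leading forms $\mu x_i^{k-1}$ admit no common zero at infinity, apply B\'ezout to get total multiplicity $(k-1)^{k-1}$, enumerate the nonzero solutions explicitly ($x_i=\mu^{-1}\omega_i$ with $\omega_i^{k}=1$ and $\prod_i\omega_i=1$, hence $k^{k-2}$ points), certify their simplicity via the Jacobian criterion, and obtain $m_{\mathcal{V}}(\mathbf{0})$ by subtraction --- all of which checks out, including the dichotomy that a solution is either $\mathbf{0}$ or has all coordinates nonzero. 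The only substantive variation is the simplicity step: the paper's method (in Lemma \ref{variety2}) is strict diagonal dominance of $J$, which would also work here since $(k-1)|P|/|x_i|^2>(k-2)|P|/(|x_i||x_l|)$ when all $|x_i|$ coincide, whereas you exploit the factorization $J=P\,D\,\bigl(kI-\mathbf{1}\mathbf{1}^{\top}\bigr)\,D$ with $D=\mathrm{diag}(x_i^{-1})$ to compute $\det J=P^{k-3}k^{k-2}$ exactly; your certificate is slightly stronger and cleaner (and the coincidence $\det\bigl(kI-\mathbf{1}\mathbf{1}^{\top}\bigr)=k^{k-2}$ pleasantly mirrors the point count), but the two approaches are otherwise interchangeable.
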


\begin{lem}\label{variety2}
Let $S=\{1,2,\ldots, k-2\}$.
For $i \in S$ and $\mu \neq 0$, let $f_i=\mu x_i^{k-1}-x^{S \setminus \{i\}}$.
Let $\mathcal{V}$ be the affine variety defined by $f_1,\ldots, f_{k-2}$.
Then $\sum_{\mathbf{0} \neq \mathbf{p} \in \mathcal{V}}m_{\mathcal{V}}(\mathbf{p})=2k^{k-3}$ and $m_{\mathcal{V}}(\mathbf{0})=(k-1)^{k-2}-2k^{k-3}$.
\end{lem}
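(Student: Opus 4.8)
The plan is to combine a B\'ezout-type count of the total intersection multiplicity with an explicit description of the nonzero points of $\mathcal{V}$, and then recover $m_{\mathcal{V}}(\mathbf{0})$ by subtraction, exactly in the spirit of Lemma \ref{variety1}. First I would verify that $\mathcal{V}$ is zero-dimensional: the leading form of each $f_i$ is $\mu x_i^{k-1}$, and these $k-2$ forms have only the trivial common zero in $\mathbb{C}^{k-2}$, so the projective closure of $\mathcal{V}$ meets no point at infinity. Since each $f_i$ has degree $k-1$, B\'ezout's theorem then gives $\sum_{\mathbf{p} \in \mathcal{V}} m_{\mathcal{V}}(\mathbf{p}) = (k-1)^{k-2}$. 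It therefore suffices to show that the nonzero points contribute exactly $2k^{k-3}$, after which $m_{\mathcal{V}}(\mathbf{0}) = (k-1)^{k-2} - 2k^{k-3}$ is immediate.

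Next I would locate the nonzero points. Writing $S=\{1,\dots,k-2\}$ and $P=\prod_{j \in S} x_j$, the equations read $\mu x_i^{k-1} = P/x_i$. A short case analysis on the set of vanishing coordinates shows that any solution other than $\mathbf{0}$ has all coordinates nonzero: if two coordinates vanished, every remaining equation would force the other coordinates to vanish as well. For a solution with all $p_i \neq 0$ we get $\mu p_i^{k} = P$ for all $i$, so $c := p_i^k = P/\mu$ is independent of $i$; taking the product over $i$ gives $(\mu c)^k = c^{k-2}$, hence $c^2 = \mu^{-k}$, yielding two admissible values of $c$. For each such $c$, the points with $p_i^k=c$ are parametrized by $p_i = q\zeta^{a_i}$ with $q$ a fixed $k$-th root of $c$, $\zeta$ a primitive $k$-th root of unity, and $a_i \in \mathbb{Z}/k\mathbb{Z}$; the constraint $\prod_i p_i = \mu c$ then selects, by the uniform distribution of $\sum_i a_i \bmod k$, exactly $k^{k-3}$ of the $k^{k-2}$ tuples (the target value $\mu c/q^{k-2}$ is a $k$-th root of unity precisely because $c^2=\mu^{-k}$). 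This produces $2k^{k-3}$ nonzero points.

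It then remains to check that each such point is simple. I would compute the Jacobian $J$ of $(f_1,\dots,f_{k-2})$ at a nonzero point $\mathbf{p}$: using $\prod_{j\neq i} p_j = \mu p_i^{k-1}$, the diagonal entries are $\mu(k-1)p_i^{k-2}$ and the off-diagonal entries are $-\mu p_i^{k-1}/p_\ell$. Factoring $\mu p_i^{k-2}$ out of row $i$ leaves the matrix $M = kI - R$, where $R_{i\ell}=p_i/p_\ell$ is the rank-one matrix $R=\mathbf{u}\mathbf{v}^{\top}$ with $u_i=p_i$, $v_\ell=1/p_\ell$ and $\mathbf{v}^{\top}\mathbf{u}=k-2$. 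Hence $M$ has eigenvalues $2$ (once) and $k$ (with multiplicity $k-3$), so $\det M = 2k^{k-3}\neq 0$ and $\det J \neq 0$; every nonzero point of $\mathcal{V}$ is thus reduced, i.e.\ $m_{\mathcal{V}}(\mathbf{p})=1$. Summing over the $2k^{k-3}$ nonzero points gives $\sum_{\mathbf{0}\neq\mathbf{p}\in\mathcal{V}}m_{\mathcal{V}}(\mathbf{p})=2k^{k-3}$, and subtracting from the B\'ezout total completes the proof.

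I expect the main obstacle to be the bookkeeping in the second step, namely justifying rigorously that the constraint $\prod_i p_i = \mu c$ is attainable (which hinges on $c^2=\mu^{-k}$) and that it reduces the solution count by exactly a factor of $k$ through the equidistribution of $\sum_i a_i \bmod k$. By contrast, the B\'ezout count and the Jacobian computation are routine once the no-points-at-infinity condition and the rank-one structure of $R$ are in hand.
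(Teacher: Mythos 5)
Your proposal is correct and follows essentially the same route as the paper's proof: a B\'ezout count of $(k-1)^{k-2}$ via the absence of solutions at infinity, an explicit enumeration showing there are exactly $2k^{k-3}$ nonzero solutions (each determined by the two values of $c$ with $c^2=\mu^{-k}$ and one linear constraint mod $k$ on the root-of-unity exponents), and a Jacobian-nonsingularity argument giving $m_{\mathcal{V}}(\mathbf{p})=1$ at each nonzero point, with $m_{\mathcal{V}}(\mathbf{0})$ recovered by subtraction. The only local difference is in the last step, where the paper verifies $\det J\neq 0$ by strict diagonal dominance, whereas you compute $\det M = 2k^{k-3}$ exactly via the rank-one structure $M=kI-\mathbf{u}\mathbf{v}^{\top}$; your explicit check that $\mu c/q^{k-2}$ is a $k$-th root of unity also makes rigorous a counting step the paper states more briefly.
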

\begin{proof}

Let the homogeneous polynomial $F_i=\mu x_i^{k-1}-x_0^2x^{S \setminus \{i\}}$ for $i \in S$.
Thus, we have that  $f_i=F_i|_{x_0=1}$.
Let $\overline{F}_i=F_i|_{x_0=0}=\mu x_i^{k-1}$, and note that the homogeneous equations $\overline{F}_1=\cdots=\overline{F}_{k-2}=0$ only have trivial solutions. It implies the affine equations
\begin{align}\label{eqf}
  f_1=\cdots=f_{k-2}=0
\end{align}
has no solutions at infinity, and then all solutions of \eqref{eqf} lie in $\mathbb{C}^{k-2}$. By B\'{e}zout's theorem, we know that the total multiplicity of the solutions of \eqref{eqf} is  $(k-1)^{k-2}$.

For $\mathbf{p}=(p_i) \in \mathcal{V}$, by \eqref{eqf}, we have that $\mu p^{k}_1=\cdots=\mu p^{k}_{k-2}=p^{S}$.
Thus, we have $(p^{S})^{k-2}(\mu^{k-2}(p^{S})^2-1)=0$.
Let $\xi$ be such that $\mu^{k-2}\xi^2-1=0$, indeed $p^{S}= \pm \xi$ for $p^{S}\neq 0$.
Then, for all $i \in [k-2]$, we have $p_i=0$ or $\mu p_i^{k}=\pm \xi$.
In the case of $\mathbf{p} \neq 0 $, for a fixed $i$, we know that $p_i$ has $k$ choices because  $\mu p_i^{k}=\pm \xi$.
In variables $p_{1}, \ldots, p_{k-2}$, there are $k-3$ variables can be chosen freely from the $k$ potential choices, while the remaining one variable has a unique choice to satisfy $p^{S}=\pm \xi$.
Then there are $2k^{k-3}$ distinct non-trivial solutions of \eqref{eqf}.
We will show that every non-trivial solution has multiplicity one, thereby completing the proof.

We use the following fact from multiplicity theory \cite[Page 125]{usingalgebraic}:
if $f_1=\cdots=f_{k-2}=0$ has finitely many solutions and $\mathbf{p}$ is a solution such that the gradient vectors
\begin{align*}
  \nabla f_i(\mathbf{p})=\left(\frac{\partial f_i}{\partial x_1}(\mathbf{p}), \ldots, \frac{\partial f_i}{\partial x_{k-2}}(\mathbf{p}) \right), 1 \leq i \leq k-2
\end{align*}
are linearly independent, then $\mathbf{p}$ is a solution with multiplicity one.
Let $J=(J_{ij})$ be the Jacobian Matrix, i.e., $(J_{i1},J_{i2},\ldots,J_{i k-2})=\nabla f_i(\mathbf{p})$.
Then the entries $J_{ii}=(k-1)\mu p_{i}^{k-2}$ and $J_{ij}=-p^{S \setminus \{i,j\}}$ for $i \neq j$.
Note that $J$ is a $(k-2) \times (k-2)$ square matrix and $|p_i|=|\mu|^{\frac{2-k}{2}}$.
Since $|J_{ii}|=(k-1)|\mu|^{\frac{4-k}{2}} > \sum_{j: j \neq i} |J_{ij}|=(k-3)|\mu|^{\frac{4-k}{2}}$ for all $i \in S$, the matrix $J$ is strictly diagonal dominant.
Then $J$ is non-singular, which implies that the gradient vectors are linearly independent.
\end{proof}

\vspace{2mm}
Now, we are ready to determine $\#\mathbb{V}_{\Lambda}$, and then complete the proof of Theorem \ref{multiplicity}.

\vspace{2mm}

\noindent{\textbf{Part 2 of Theorem \ref{multiplicity}}} $\#\mathbb{V}_{\Lambda}=\sum_{\mathbf{p} \in \mathbb{V}_{\Lambda}} m_{\mathbb{V}_{\Lambda}}(\mathbf{p})=\sum_{\delta=0}^{1}f_{\delta}(k)n_{\delta}$.
\begin{proof}
For each $\mathbf{p}=(p_i) \in \mathbb{V}_{\Lambda}$,  recall that $\hat{\mathbf{p}}$ denote the subvector of $\mathbf{p}$ obtained truncating entry $p_i$ from $\mathbf{p}$ for $i \in supp(\mathbf{p})$.
Thus, there exists $e \in E_w$ such that $\hat{\mathbf{p}} \in \mathbb{V}_{\rho}(G_e^{(k)})$, and we have that $\mathbf{p} = \hat{\mathbf{p}} \oplus \mathbf{0}_{\hat{\mathbf{p}}} $, where $\mathbf{0}_{\hat{\mathbf{p}}} $ is a $(k-1-\delta)$-dimensional zero vector.
Let $\mathcal{V}_0$ and $\mathcal{V}_1$ denote the  affine variety defined in Lemmas \ref{variety1} and \ref{variety2}, respectively.
We note that $\mathbf{0}_{\hat{\mathbf{p}}} \in \mathcal{V}_{\delta}$ for  $\hat{\mathbf{p}} \in  \mathbb{V}_{\rho}(G_e^{(k)})$, and note that $\mathbb{V}_{\Lambda}\cong \bigcup_{e \in E_w} \left(\mathbb{V}_{\rho}(G_e^{(k)})\oplus \mathcal{V}_{\delta}\right)$ since the map in \eqref{eq14} is a bijective map.
Hence, for a fixed $\mathbf{p} \in \mathbb{V}_{\Lambda}$, we have that $m_{\mathbb{V}_{\Lambda}}(\mathbf{p})=m_{\mathbb{V}_{\rho}(G_e^{(k)})}(\hat{\mathbf{p}})m_{\mathcal{V}_{\delta}}(\mathbf{0}_{\hat{\mathbf{p}}})$.
It is following that
\begin{align}
 \#\mathbb{V}_{\Lambda}&=\sum_{\mathbf{p} \in \mathbb{V}_{\Lambda}} m(\mathbf{p}) \notag \\
 &=\sum_{e \in E_w}\sum_{\hat{\mathbf{p}} \in  \mathbb{V}_{\rho}(G_e^{(k)})}m_{\mathbb{V}_{\rho}(G_e^{(k)})}(\hat{\mathbf{p}})m_{\mathcal{V}_{\delta}}(\mathbf{0}_{\hat{\mathbf{p}}}).\label{eq4.8}
\end{align}
Lemma \ref{xinlemma}(b) and Lemma \ref{xinlemma}(c) show that  $m_{\mathbb{V}_{\rho}(G_e^{(k)})}(\hat{\mathbf{p}})=1$ for all ${\hat{\mathbf{p}} \in  \mathbb{V}_{\rho}(G_e^{(k)})}$, and $|\mathbb{V}_{\rho}(G_e^{(k)})|=k^{|E|(k-3)+|V|-k+2-\delta}$.
Lemmas \ref{variety1} and \ref{variety2} show that $m_{\mathcal{V}_{\delta}}(\mathbf{0}_{\hat{\mathbf{p}}})=(k-1)^{k-1-\delta}-2^{\delta}k^{k-3+\delta}$.
Reorganizing \eqref{eq4.8}, we complete the proof.
\end{proof}


\section*{References}
\bibliographystyle{plain}
\bibliography{sebib}

\begin{thebibliography}{10}

\bibitem{Chang2008}
Kungching Chang, Kelly Pearson, and Tan Zhang.
\newblock Perron-{F}robenius theorem for nongeative tensors.
\newblock {\em Communications in Mathematical Sciences}, 6:207--520, 2008.

\bibitem{CHEN2021}
Lixiang Chen and Changjiang Bu.
\newblock A reduction formula for the characteristic polynomial of hypergraph
  with pendant edges.
\newblock {\em Linear Algebra and its Applications}, 611:171--186, 2021.

\bibitem{CHEN2023205}
Lixiang Chen, Edwin {van Dam}, and Changjiang Bu.
\newblock All eigenvalues of the power hypergraph and signed subgraphs of a
  graph.
\newblock {\em Linear Algebra and its Applications}, 676:205--210, 2023.

\bibitem{CHEN2024105909}
Lixiang Chen, Edwin {van Dam}, and Changjiang Bu.
\newblock Spectra of power hypergraphs and signed graphs via parity-closed
  walks.
\newblock {\em Journal of Combinatorial Theory, Series A}, 207:105909, 2024.

\bibitem{chung1993laplacian}
Fan R.~K. Chung.
\newblock The {L}aplacian of a hypergraph.
\newblock In {\em Expanding Graphs}, volume~10 of {\em DIMACS Series in
  Discrete Mathematics and Theoretical Computer Science}, pages 21--36.
  American Mathematical Society, Providence, RI, 1993.
\newblock Princeton, NJ, 1992.

\bibitem{cooper2012spectra}
Joshua Cooper and Aaron Dutle.
\newblock Spectra of uniform hypergraphs.
\newblock {\em Linear Algebra and its Applications}, 436(9):3268--3292, 2012.

\bibitem{Cooper2022}
Joshua Cooper and Grant Fickes.
\newblock Geometric vs algebraic nullity for hyperpaths.
\newblock {\em Pure and Applied Mathematics Quarterly}, 18:2433--2460, 2022.

\bibitem{usingalgebraic}
David Cox, John Little, and Donal O'Shea.
\newblock {\em Using Algebraic Geometry}, volume 185 of {\em Graduated Texts in
  Mathematics}.
\newblock Springer, New York, second edition edition, 1998.

\bibitem{cve2010spectra}
Drago{\v{s}} Cvetkovi{\'c}, Peter Rowlinson, and Slobodan Simi{\'c}.
\newblock {\em An Introduction to the Theory of Graph Spectra}.
\newblock Cambridge University Press, 2010.

\bibitem{cvetkovic1980spectra}
Drago{\v{s}} Cvetkovi{\'c}, Horst Sachs, and Michael Doob.
\newblock {\em Spectra of Graphs: Theory and Applications}.
\newblock VEB Deutscher Verlag der Wissenschaften, 1980.

\bibitem{fan2024}
Yizheng Fan.
\newblock The multiplcity of eigenvalues of nonnegative tensors and unifrom
  hyperrgraphs.
\newblock arXiv:2410.20830v1, 2024.

\bibitem{Fan2019}
Yizheng Fan, Yanhong Bao, and Tao Huang.
\newblock Eigenvariety of nonnegative symmetric weakly irreducible tensors
  asscociated with the spectral radius and its application to hypergraph.
\newblock {\em Linear Algebra and its Applications}, 564:72--94, 2019.

\bibitem{fan2019dimension}
Yizheng Fan, Tao Huang, and Yanhong Bao.
\newblock The dimension of eigenvartiety of nonngetive tensors associated with
  spectral radius.
\newblock {\em Proceedings of the American Mathematical Society},
  150(6):2287--2299, 2022.

\bibitem{fan2019spectral}
Yizheng Fan, Tao Huang, Yanhong Bao, Chenluzhuan Sun, and Yaping Li.
\newblock The spectral symmetry of weakly irreducible nonnegative tensors and
  connected hypergraphs.
\newblock {\em Transactions of the American Mathematical Society},
  372(3):2213--2233, 2019.

\bibitem{feng1996spectra}
Keqin Feng and Wenching Li.
\newblock Spectra of hypergraphs and applications.
\newblock {\em J. Number Theory}, 60(1):1--22, 1996.

\bibitem{Friedman1995second}
Joel Friedman and Avi Wigderson.
\newblock On the second eigenvalue of hypergraphs.
\newblock {\em Combinatorica}, 15(1):43--65, 1995.

\bibitem{Hu2016}
Shenglong Hu and Ke~Ye.
\newblock Multiplicities of tensor eigenvalues.
\newblock {\em Communications in Mathematical Sciences}, 14:1049--1071, 2016.

\bibitem{zi2021}
Zilin Jiang, Jonathan Tidor, Yuan Yao, Shengtong Zhang, and Yufei Zhao.
\newblock Equiangular lines with a fixed angle.
\newblock {\em Annals of Mathematics}, 194(3):729 -- 743, 2021.

\bibitem{Li2019first}
Honghai Li and Bojan Mohar.
\newblock On the first and second eigenvalue of finite and infinite unifrom
  hypergraph.
\newblock {\em Proceedings of the American Mathematical Society},
  147(3):933--946, 2019.

\bibitem{lim2005singular}
Lek-Heng Lim.
\newblock Singular values and eigenvalues of tensors: a variational approach.
\newblock In {\em 1st IEEE International Workshop on Computational Advances in
  Multi-Sensor Adaptive Processing}, pages 129--132. IEEE, 2005.

\bibitem{Murty2003}
Maruti~Ram Murty.
\newblock Ramanujan graphs.
\newblock {\em Journal of the Ramanujan Mathematical Society}, 18:1--20, 2003.

\bibitem{qi2005eigenvalues}
Liqun Qi.
\newblock Eigenvalues of a real supersymmetric tensor.
\newblock {\em Journal of Symbolic Computation}, 40(6):1302--1324, 2005.

\bibitem{PhysRevLett.97.094102}
Juan~G. Restrepo, Edward Ott, and Brian~R. Hunt.
\newblock Characterizing the dynamical importance of network nodes and links.
\newblock {\em Physical Review Letters}, 97:094102, Sep 2006.

\bibitem{STANIC201980}
Zoran Stani{\'c}.
\newblock Bounding the largest eigenvalue of signed graphs.
\newblock {\em Linear Algebra and its Applications}, 573:80--89, 2019.

\bibitem{2015147}
Dragan Stevanovi\'{c}.
\newblock Bibliography.
\newblock In {\em Spectral Radius of Graphs}, pages 147--153. Academic Press,
  Boston, 2015.

\bibitem{Mieghem2009}
Piet Van~Mieghem, Jasmina Omic, and Robert Kooij.
\newblock Virus spread in networks.
\newblock {\em IEEE/ACM Transactions on Networking}, 17(1):1--14, 2009.

\bibitem{PhysRevE.84.016101}
Piet Van~Mieghem, Dragan Stevanovi\ifmmode~\acute{c}\else \'{c}\fi{}, Fernando
  Kuipers, Cong Li, Ruud van~de Bovenkamp, Daijie Liu, and Huijuan Wang.
\newblock Decreasing the spectral radius of a graph by link removals.
\newblock {\em Physical Review E}, 84:016101, 2011.

\bibitem{Zheng2024}
Yanan Zheng.
\newblock Algebraic and geometric multiplicities for some hypergraphs.
\newblock {\em Discrete Mathematics}, 347:113780, 2024.

\end{thebibliography}
\end{spacing}
\appendix
\section*{Appendix A}
In Appendix A, we present some examples to show that all  candidates outlined in Theorem \ref{thm4} are potential.
Let $\lambda_{\min}(G)=\lambda_n(G)$ for the graph $G$ with $n$ vertices.

For odd-unicyclic graphs, we have $\Lambda(G_1^{(3)})=\sqrt[3]{\rho_V(G_1)^2}=\sqrt[3]{4}$ and $\Lambda(G_2^{(3)})=\sqrt[3]{\lambda_{\min}(G_1)^2}=\sqrt[3]{(2.50578)^2}$.

For bipartite graphs that are not trees, we have $\Lambda(G_3^{(3)})=\sqrt[3]{\rho_V(G_3)^2}=\sqrt[3]{4}$ and $\Lambda(G_4^{(3)})=\sqrt[3]{\rho_{\Gamma}(G_4)^2}=\sqrt[3]{(2.56155)^2}$.

For non-bipartite graphs that are not odd-unicyclic graphs,
we have $\Lambda(G_5^{(3)})=\sqrt[3]{\rho_V(G_5)^2}=\sqrt[3]{9}$; $\Lambda(G_6^{(3)})=\sqrt[3]{\rho_{\Gamma}(G_6)^2}=\sqrt[3]{(2.23607)^2}$; and $\Lambda(G_7^{(3)})=\sqrt[3]{\lambda_{\min}(G_7)^2}=\sqrt[3]{(2.75099)^2}$.
\begin{figure}[H]
\centering
\subfloat[$G_1$]{
\begin{minipage}[t]{0.20\textwidth}
\centering
\includegraphics[scale=0.2]{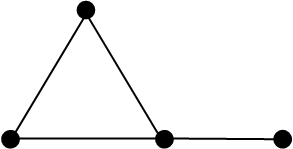}
\end{minipage}
}
\subfloat[$G_2$]{
\begin{minipage}[t]{0.20\textwidth}
\centering
\includegraphics[scale=0.2]{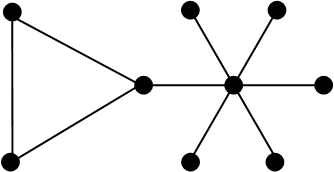}
\end{minipage}
}
\subfloat[$G_3$]{
\begin{minipage}[t]{0.20\textwidth}
\centering
\includegraphics[scale=0.2]{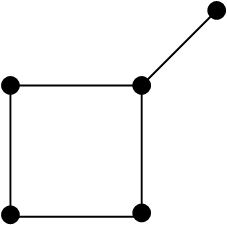}
\end{minipage}
}
\subfloat[$G_4$]{
\begin{minipage}[t]{0.20\textwidth}
\centering
\includegraphics[scale=0.2]{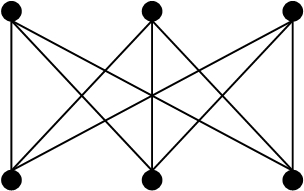}
\end{minipage}
}\\
\subfloat[$G_5$]{
\begin{minipage}[t]{0.20\textwidth}
\centering
\includegraphics[scale=0.2]{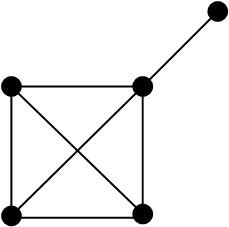}
\end{minipage}
}\subfloat[$G_6$]{
\begin{minipage}[t]{0.20\textwidth}
\centering
\includegraphics[scale=0.2]{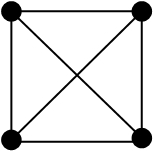}
\end{minipage}
}\subfloat[$G_7$]{
\begin{minipage}[t]{0.20\textwidth}
\centering
\includegraphics[scale=0.2]{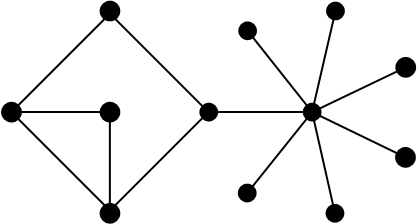}
\end{minipage}
}
\caption{Examples for Theorem \ref{thm4}}
\end{figure}

\end{document}